\documentclass[12pt,twoside,reqno]{amsart}
\usepackage{amsmath}
\usepackage{amsfonts}
\usepackage{amssymb}
\usepackage{xcolor}
\usepackage{graphicx}
\usepackage{mathtools}
\usepackage{mathrsfs}
\usepackage{cite}
\usepackage{cleveref}
\usepackage{geometry}
\usepackage{marginnote}
\usepackage{todonotes}
\newtheorem{theorem}{Theorem}[section]
\newtheorem{corollary}[theorem]{Corollary}
\newtheorem{lemma}[theorem]{Lemma}
\newtheorem{proposition}[theorem]{Proposition}

\newtheorem{remark}[theorem]{Remark}
\allowdisplaybreaks
\numberwithin{equation}{section}
\begin{document}
\title{A sharpened Carlson's integral inequality} 
\thanks{\texttt{orcid: 0000-0003-3091-8085}}

\author{Philippe Lauren\c{c}ot}
\address{Laboratoire de Math\'ematiques (LAMA) UMR~5127, Universit\'e Savoie Mont Blanc, CNRS, F--73000 Chamb\'ery, France}
\email{philippe.laurencot@univ-smb.fr}

\keywords{Carlson's inequality, optimal constant}
\subjclass{26D15}

\date{\today}

\begin{abstract}
A sharpened version of Carlson's integral inequality is established, which features a remainder term involving a relative distance to the set of extremal functions. A similar approach leads to an alternative derivation of Carlson's and Landau's discrete inequalities and provides another characterization of the optimal constant in Carlson's inequality for finite sums.  
\end{abstract}

\maketitle

%
%
\pagestyle{myheadings}
\markboth{\sc{Ph. Lauren\c{c}ot}}{\sc{Carlson's inequalities}}

\section{Introduction}\label{sec1}

In \cite{Car1934}, Carlson proves that, for any non-zero sequence $\mathbf{a}=(a_n)_{n\ge 1}$ of non-negative real numbers,
\begin{equation}
	\left( \sum_{n=1}^\infty a_n \right)^4 < \pi^2 \left( \sum_{n=1}^\infty a_n^2 \right) \left( \sum_{n=1}^\infty n^2 a_n^2 \right), \label{DCI} 
\end{equation}
the constant $\pi^2$ being the best possible, see also \cite{Har1936} for simpler proofs, as well as \cite[Ch.~1]{LMPP2006} and \cite[Ch.~VIII \S~1]{MPF1991} for more information about Carlson's inequality and its extensions. An integral counterpart of~\eqref{DCI} is also derived in \cite{Car1934}: it states that every non-negative function $f\in L^2((0,\infty),(1+x^2) \mathrm{d}x)$ belongs to $ L^1((0,\infty))$ and satisfies
\begin{equation}
	\left( \int_0^\infty f(x)\ \mathrm{d}x \right)^4 \le \pi^2 \left( \int_0^\infty f^2(x)\ \mathrm{d}x \right) \left( \int_0^\infty x^2 f^2(x)\ \mathrm{d}x \right), \label{CI}
\end{equation}
the constant $\pi^2$ being again the best possible, see also \cite[Ch.~5 \S~8]{BeBe1983}. More precisely, equality holds in~\eqref{CI} if and only if there are $(A,\lambda,\mu)\in [0,\infty)\times (0,\infty)^2$ such that
\begin{equation*}
	f(x) = \frac{A}{\mu + \lambda x^2}, \qquad x\in (0,\infty),
\end{equation*}
a feature which contrasts markedly with Carlson's discrete inequality~\eqref{DCI} for which there is no extremal sequence due to the strict inequality in~\eqref{DCI}. Replacing $f$ by $f/\|f\|_{L^1((0,\infty))}$ in~\eqref{CI}, we observe that~\eqref{CI} is equivalent to the minimizing problem
\begin{equation}
	\frac{1}{\pi} = \min_{f\in\mathcal{C}_1} \mathcal{J}[f] \;\;\text{ with }\;\; \mathcal{J}[f] := \left( \int_0^\infty f^2(x)\ \mathrm{d}x \right)^{1/2} \left( \int_0^\infty x^2 f^2(x)\ \mathrm{d}x \right)^{1/2}, \label{CIE}
\end{equation}
where
\begin{equation*}
	\mathcal{C}_1 := \left\{ f\in L^2((0,\infty),(1+x^2) \mathrm{d}x)\ : \ f\ge 0 \;\text{ a.e. and }\; \|f\|_{L^1((0,\infty))}=1 \right\}.
\end{equation*} 
In addition, $f\in\mathcal{C}_1$ is a minimizer of~\eqref{CIE} (i.e. $\mathcal{J}[f]=1/\pi$) if and only if $f\in\mathcal{E} := \{\varphi_z\ :\ z>0\}$, where
\begin{equation}
	\varphi_z(x) := \frac{2}{\pi} \frac{z}{1+z^2 x^2}, \qquad (x,z)\in (0,\infty)^2. \label{CIM}
\end{equation}

We aim at showing the following refined version of~\eqref{CIE}.

\begin{theorem}\label{thm1}
	Let $f\in \mathcal{C}_1$. Then 
	\begin{equation}
		\mathcal{J}[f] = \left( \int_0^\infty f^2(x)\ \mathrm{d}x \right)^{1/2} \left( \int_0^\infty x^2 f^2(x)\ \mathrm{d}x \right)^{1/2} \ge \frac{1}{\pi} + \frac{1}{\pi} d^2(f,\mathcal{E}),  \label{CIR01}
	\end{equation}	
	where
	\begin{equation}
		d^2(f,\mathcal{E}) := \inf_{z>0} \int_0^\infty \frac{(f-\varphi_{z})^2(x)}{\varphi_{z}(x)}\ \mathrm{d}x. \label{CIR02}
	\end{equation}
	Moreover, the constant $1/\pi$ is optimal. In fact, introducing
	\begin{equation}
		\varrho(f) := \frac{1}{d^2(f,\mathcal{E})} \left[ \mathcal{J}[f] - \frac{1}{\pi} \right] \label{CIR03}
	\end{equation}
	for $f\in \mathcal{C}_1$, we have
	\begin{equation}
		\inf_{f\in\mathcal{C}_1} \varrho(f) = \min_{f\in\mathcal{C}_1} \varrho(f) = \frac{1}{\pi} = \varrho(\psi_z), \qquad z\in (0,\infty), \label{CIR04}
	\end{equation}
	with
	\begin{equation}
		\psi_z(x) := \frac{4}{\pi} \frac{z}{(1+z^2 x^2)^2}, \qquad (x,z)\in (0,\infty)^2. \label{CIR05}
	\end{equation}
\end{theorem}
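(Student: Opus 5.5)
The plan is to show that \eqref{CIR01} is in fact an \emph{identity},
\begin{equation*}
\mathcal{J}[f] = \frac{1}{\pi}\bigl(1+d^2(f,\mathcal{E})\bigr), \qquad f\in\mathcal{C}_1,
\end{equation*}
obtained by computing the weighted distance in \eqref{CIR02} explicitly.

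\textbf{Step 1: compute the distance to a fixed $\varphi_z$.} Fix $f\in\mathcal{C}_1$ and $z>0$. Expanding the square gives
\begin{equation*}
\int_0^\infty \frac{(f-\varphi_z)^2}{\varphi_z}\,\mathrm{d}x = \int_0^\infty \frac{f^2}{\varphi_z}\,\mathrm{d}x - 2\int_0^\infty f\,\mathrm{d}x + \int_0^\infty \varphi_z\,\mathrm{d}x .
\end{equation*}
The middle term equals $-2$ because $\|f\|_{L^1((0,\infty))}=1$. The last term equals $1$, since $\frac{2}{\pi}\int_0^\infty \frac{z}{1+z^2x^2}\,\mathrm{d}x=1$. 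The first term is finite, because $1/\varphi_z(x)=\frac{\pi}{2z}(1+z^2x^2)$ and $f\in L^2((0,\infty),(1+x^2)\mathrm{d}x)$.

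Setting $A:=\int_0^\infty f^2\,\mathrm{d}x$ and $B:=\int_0^\infty x^2 f^2\,\mathrm{d}x$, this yields
\begin{equation*}
\int_0^\infty \frac{(f-\varphi_z)^2}{\varphi_z}\,\mathrm{d}x = \frac{\pi}{2}\Bigl(\frac{A}{z} + zB\Bigr) - 1 .
\end{equation*}

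\textbf{Step 2: minimize over $z$.} Since $f\ge 0$ has unit $L^1$-norm, $f$ is not a.e. zero, so $A>0$ and $B>0$. The function $z\mapsto A/z+zB$ is minimized at $z_*=\sqrt{A/B}$, where it takes the value $2\sqrt{AB}$. Hence the infimum in \eqref{CIR02} is attained, and
\begin{equation*}
d^2(f,\mathcal{E}) = \pi\sqrt{AB}-1 = \pi\,\mathcal{J}[f]-1 .
\end{equation*}

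\textbf{Step 3: conclusions.}
\begin{itemize}
\item Rearranging Step 2 gives \eqref{CIR01} with equality for every $f\in\mathcal{C}_1$.
\item Since $d^2\ge 0$, this identity also recovers \eqref{CIE}, without using it.
\item $d^2(f,\mathcal{E})=0$ exactly when $f=\varphi_{z_*}$ a.e., i.e.\ $f\in\mathcal{E}$. This recovers the characterization of the extremals.
\item For every $f\in\mathcal{C}_1\setminus\mathcal{E}$ (the set on which $\varrho$ in \eqref{CIR03} is defined), we get $\varrho(f)=1/\pi$. So $\varrho$ is constant, and the infimum in \eqref{CIR04} is a minimum equal to $1/\pi$. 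In particular the constant $1/\pi$ in \eqref{CIR01} cannot be improved.
\end{itemize}

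It remains only to check that $\psi_z$ is an admissible example. First, $\psi_z\in\mathcal{C}_1$: the substitution $y=zx$ and $\int_0^\infty (1+y^2)^{-2}\,\mathrm{d}y=\pi/4$ give $\int_0^\infty \psi_z\,\mathrm{d}x=1$, and the decay $(1+z^2x^2)^{-2}$ gives membership in $L^2((0,\infty),(1+x^2)\mathrm{d}x)$. Second, $\psi_z\notin\mathcal{E}$, because its decay rate differs from that of every $\varphi_w$. Then $\varrho(\psi_z)=1/\pi$ by the identity above.

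I do not expect a serious obstacle. The only points needing care are the integrability justifications in the expansion of Step 1, which all follow from $f\in L^2((0,\infty),(1+x^2)\mathrm{d}x)$ together with $f\in L^1$, and the observation that the infimum over $z$ is attained.
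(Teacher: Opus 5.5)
Your proof is correct. Its core is the same identity the paper relies on: the proof of \Cref{prop1} establishes, for every $z>0$,
\[
\frac{z q_{2,f}}{2}+\frac{q_{0,f}}{2z} = \frac{1}{\pi}+\frac{1}{\pi}\int_0^\infty \frac{(f-\varphi_z)^2(x)}{\varphi_z(x)}\,\mathrm{d}x,
\]
which is your Step~1 formula rearranged. The paper reaches it by expanding $q_{0,f}$ and $q_{2,f}$ around $\varphi_z$ and cancelling the cross term; you simply expand the square.

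The two routes differ in the next step. The paper plugs in $z=r_f$ and then uses only $d^2(f,\mathcal{E})\le\int_0^\infty (f-\varphi_{r_f})^2/\varphi_{r_f}\,\mathrm{d}x$, so it obtains an inequality. It then proves sharpness separately, by computing $\zeta\mapsto\int_0^\infty(\psi_z-\varphi_\zeta)^2/\varphi_\zeta\,\mathrm{d}x$ for the particular family $\psi_z$. That computation is exactly your Step~1 specialized to $f=\psi_z$.

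You instead minimize over $z$ for a general $f$ and observe that the minimizer is precisely $r_f=\sqrt{A/B}$. Hence the infimum defining $d^2(f,\mathcal{E})$ is attained there, and \eqref{CIR01} holds with equality for every $f\in\mathcal{C}_1$. This gives more than the paper states:
\begin{itemize}
\item $\varrho\equiv 1/\pi$ on $\mathcal{C}_1\setminus\mathcal{E}$, so the functions $\psi_z$ play no special role and any non-extremal $f$ witnesses optimality;
\item the remainder $d^2(f,\mathcal{E})$ is just $\pi\mathcal{J}[f]-1$ in disguise;
\item $\varrho$ is only defined off $\mathcal{E}$, where it would otherwise be $0/0$. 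You state this correctly; the paper leaves it implicit.
\end{itemize}
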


The main building block of the proof is the following identity, which seems to be unnoticed so far and provides an alternative formula for the left-hand side of~\eqref{CIR01}.

\begin{proposition}\label{prop1}
	Let $f\in \mathcal{C}_1$. Then 
	\begin{equation}
		\mathcal{J}[f] = \frac{1}{\pi} + \frac{1}{\pi} \int_0^\infty \frac{(f-\varphi_{r_f})^2(x)}{\varphi_{r_f}(x)}\ \mathrm{d}x \label{CIR06}
	\end{equation}
	with
	\begin{equation}
		r_f^2 := \left( \int_0^\infty f^2(x)\ \mathrm{d}x \right) \left( \int_0^\infty x^2 f^2(x)\ \mathrm{d}x \right)^{-1}, \label{CIR07}
	\end{equation}
	the functional $\mathcal{J}$ and the function $\varphi_z$, $z>0$, being defined in~\eqref{CIE} and~\eqref{CIM}, respectively.
\end{proposition}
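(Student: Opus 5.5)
Expanding the square turns the right-hand side of \eqref{CIR06} into a combination of moments of $f$, and the choice \eqref{CIR07} of $r_f$ should make that combination collapse to $\mathcal{J}[f]$. Set $I_0 := \int_0^\infty f^2\,\mathrm{d}x$ and $I_2 := \int_0^\infty x^2 f^2\,\mathrm{d}x$, both finite since $f\in\mathcal{C}_1$. If $f$ is nonzero on a set of positive measure, then $I_0>0$ and $I_2>0$, so $r_f\in(0,\infty)$ is well defined with $r_f^2 = I_0/I_2$.

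For $z>0$, expand
\begin{equation*}
	\int_0^\infty \frac{(f-\varphi_z)^2}{\varphi_z}\,\mathrm{d}x = \int_0^\infty \frac{f^2}{\varphi_z}\,\mathrm{d}x - 2\int_0^\infty f\,\mathrm{d}x + \int_0^\infty \varphi_z\,\mathrm{d}x .
\end{equation*}
Each term is computed directly.
\begin{itemize}
	\item Since $1/\varphi_z(x) = \pi(1+z^2x^2)/(2z)$, the first term equals $\frac{\pi}{2z}\left(I_0 + z^2 I_2\right)$.
	\item The middle term is $-2$ because $\|f\|_{L^1}=1$.
	\item The last term is $\frac{2}{\pi}\int_0^\infty \frac{z}{1+z^2x^2}\,\mathrm{d}x = \frac{2}{\pi}\cdot\frac{\pi}{2} = 1$.
\end{itemize}
All three integrals are finite, so the splitting is legitimate. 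This gives
\begin{equation*}
	\int_0^\infty \frac{(f-\varphi_z)^2}{\varphi_z}\,\mathrm{d}x = \frac{\pi}{2}\left(\frac{I_0}{z} + z I_2\right) - 1 .
\end{equation*}

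Next take $z = r_f$. Then $I_0/r_f = \sqrt{I_0 I_2}$ and $r_f I_2 = \sqrt{I_0 I_2}$, so the bracket equals $2\sqrt{I_0I_2} = 2\mathcal{J}[f]$. Hence the integral equals $\pi\mathcal{J}[f]-1$. Dividing by $\pi$ and adding $1/\pi$ gives exactly \eqref{CIR06}.

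No step is a real obstacle; the argument is a direct computation. The one point to state carefully is the degenerate case $f=0$ a.e., which cannot occur because $\|f\|_{L^1}=1$; this guarantees $r_f$ is finite and positive. As a by-product, the displayed formula is minimized over $z$ exactly at $z=r_f$, by AM--GM. This identifies $d^2(f,\mathcal{E})$ with the integral in \eqref{CIR06}, which is what Theorem~\ref{thm1} will use.
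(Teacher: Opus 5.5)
Your proof is correct and is essentially the paper's argument. Both rest on the identity $\frac{z I_2}{2}+\frac{I_0}{2z}=\frac1\pi+\frac1\pi\int_0^\infty\frac{(f-\varphi_z)^2}{\varphi_z}\,\mathrm{d}x$ for every $z>0$, followed by the choice $z=r_f$; the paper gets the identity by writing $f=\varphi_z+(f-\varphi_z)$ in the two moments and killing the cross term, while you expand the square directly, which is the same computation the paper itself uses for $\psi_z$ in the proof of Theorem~\ref{thm1}. Your AM--GM remark is also correct and sharper than what the paper records: it shows $d^2(f,\mathcal{E})=\pi\mathcal{J}[f]-1$, so \eqref{CIR01} holds with equality and $\varrho\equiv 1/\pi$ on $\mathcal{C}_1\setminus\mathcal{E}$, whereas the paper's proof of Theorem~\ref{thm1} uses only the trivial inequality $\int_0^\infty(f-\varphi_{r_f})^2/\varphi_{r_f}\,\mathrm{d}x\ge d^2(f,\mathcal{E})$.
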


We now show that the technique used to prove \Cref{thm1} applies as well to  Carlson's discrete inequality~\eqref{DCI}. Let us first recall that it is actually shown in \cite{Car1934} that
\begin{equation*}
	\left( \sum_{n=1}^\infty a_n \right)^4 < \pi^2 \left( \sum_{n=1}^\infty a_n^2 \right) \left( \sum_{n=1}^\infty \left[ \left( n - \frac{1}{2}\right)^2 + \frac{3}{16} \right] a_n^2 \right) 
\end{equation*}
for any non-zero sequence $\mathbf{a}=(a_n)_{n\ge 1}$ of non-negative real numbers, the constant $\pi^2$ being the best possible. The above inequality obviously implies~\eqref{DCI} and is subsequently improved by Landau to the following inequality
\begin{equation}
	\left( \sum_{n=1}^\infty a_n \right)^4 < \pi^2 \left( \sum_{n=1}^\infty a_n^2 \right) \left( \sum_{n=1}^\infty \left( n - \frac{1}{2}\right)^2  a_n^2 \right) \label{DLI}
\end{equation}
for any non-zero sequence $\mathbf{a}=(a_n)_{n\ge 1}$ of non-negative real numbers, the constant $\pi^2$ being again the best possible, see \cite[Eq.~(2.7)]{LMPP2006}. As in the continuous case, the inequalities~\eqref{DCI} and~\eqref{DLI} are equivalent to the minimizing problem
\begin{equation}
	\frac{1}{\pi} = \inf_{\mathbf{a}\in\mathcal{D}_1} \left\{ \left( \sum_{n=1}^\infty a_n^2 \right)^{1/2} \left( \sum_{n=1}^\infty \left( n - \theta \right)^2 a_n^2 \right)^{1/2} \right\}, \qquad \theta\in \left\{0,\frac{1}{2}\right\}, \label{DLIE} 
\end{equation}
where
\begin{equation*}
	\mathcal{D}_1 := \left\{ \mathbf{a}=(a_n)_{n\ge 1}\ :\ \sum_{n=1}^\infty n^2 a_n^2 < \infty, \ \sum_{n=1}^\infty a_n =1 \;\text{ and }\; a_n\ge 0, \ n\ge 1 \right\}.
\end{equation*}
We are then led to study
\begin{equation}
	\mu(\theta) := \inf_{\mathbf{a}\in\mathcal{D}_1} \big\{ J_\theta[\mathbf{a}] \big\} \;\;\text{ with }\;\; J_\theta[\mathbf{a}] :=  \left( \sum_{n=1}^\infty a_n^2 \right)^{1/2} \left( \sum_{n=1}^\infty \left( n - \theta \right)^2 a_n^2 \right)^{1/2} \label{DCIR01}
\end{equation}
for $\theta\in [0,1)$. According to~\eqref{DLIE} and the monotonicity of $\theta\mapsto\mu(\theta)$, 
\begin{equation}
	\mu(0) = \mu(1/2) = \mu(\theta) = \frac{1}{\pi}, \qquad \theta\in [0,1/2]. \label{DCIR02}
\end{equation}
We shall see now that the range of $\theta$ for which~\eqref{DCIR02} is valid is optimal.

\begin{theorem}\label{thm2}
Let $\theta\in [0,1)$ and define
\begin{equation}
	\nu_\theta(z) := \sum_{n=1}^\infty \frac{2z}{1+z^2(n-\theta)^2}, \qquad z\in (0,\infty). \label{DCIR03}
\end{equation}
Introducing
\begin{equation}
	\bar{\nu}_\theta := \sup_{z\in (0,1/(1-\theta)]} \nu_\theta(z) = \sup_{z\in (0,1/(1-\theta))} \nu_\theta(z), \label{DCIR04}
\end{equation}
there holds $\mu(\theta) = 1/\bar{\nu}_\theta$. In addition,
\begin{equation}
	\bar{\nu}_\theta = \pi \;\;\text{ if and only if}\;\; \theta\in [0,1/2]. \label{DCIR05}
\end{equation}
More precisely, 
\begin{itemize}
	\item [$\triangleright$] for $\theta\in [0,1/2]$, there holds $J_\theta[\mathbf{a}]>\mu(\theta)=1/\pi$ for all $\mathbf{a}\in\mathcal{D}_1$;
	\item [$\triangleright$] for $\theta\in (1/2,1)$, $\mu(\theta)<1/\pi$ and the set $\mathcal{S}_\theta := \{z\in (0,1/(1-\theta))\ :\ \nu_\theta(z)=\bar{\nu}_\theta\}$ is not empty. Furthermore, $J_\theta[\mathbf{a}]= \mu(\theta)$ if and only if there is $Z\in \mathcal{S}_\theta$ such that $\mathbf{a}=\boldsymbol{\varphi}_Z$, where $\boldsymbol{\varphi}_Z = \big(\varphi_{n,z}\big)_{n\ge 1}$ is defined in~\eqref{DCIR08} below. 
\end{itemize} 
\end{theorem}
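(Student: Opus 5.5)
The plan is to mimic \Cref{prop1} in the discrete setting. For $z>0$ I will take
\begin{equation*}
\varphi_{n,z} := \frac{1}{\nu_\theta(z)}\,\frac{2z}{1+z^2(n-\theta)^2}, \qquad n\ge1,
\end{equation*}
so that $\boldsymbol{\varphi}_z\in\mathcal{D}_1$. This should agree with~\eqref{DCIR08}. Write $m=n-\theta$, $S_0=\sum a_n^2$ and $S_2=\sum m^2a_n^2$.

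\emph{The identity.} For $\mathbf{a}\in\mathcal{D}_1$, expanding the square and using $\sum a_n=\sum\varphi_{n,z}=1$ gives
\begin{equation*}
\sum_{n\ge1}\frac{(a_n-\varphi_{n,z})^2}{\varphi_{n,z}} = \frac{\nu_\theta(z)}{2z}\big(S_0+z^2S_2\big)-1 .
\end{equation*}
Now choose $z=r_{\mathbf{a}}:=(S_0/S_2)^{1/2}$. Then $S_0+r^2S_2=2S_0$ and $J_\theta[\mathbf{a}]=S_0/r$, so
\begin{equation*}
J_\theta[\mathbf{a}] = \frac{1}{\nu_\theta(r_{\mathbf{a}})}\Big(1+\sum_{n}\frac{(a_n-\varphi_{n,r_{\mathbf{a}}})^2}{\varphi_{n,r_{\mathbf{a}}}}\Big).
\end{equation*}
This is the discrete analogue of~\eqref{CIR06}. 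Since $m\ge 1-\theta$, we have $S_2\ge(1-\theta)^2S_0$, hence $r_{\mathbf{a}}\le 1/(1-\theta)$. Therefore $J_\theta[\mathbf{a}]\ge 1/\nu_\theta(r_{\mathbf{a}})\ge 1/\bar\nu_\theta$.

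\emph{The reverse bound.} For the reverse inequality I will use AM--GM, $\sqrt{S_0S_2}\le\frac12(S_0/Z+ZS_2)$, on $\boldsymbol{\varphi}_Z$. A direct computation with $c=2Z/\nu_\theta(Z)$ gives
\begin{equation*}
\frac{S_0}{Z}+ZS_2=\frac{c^2}{Z}\sum_n\frac{1}{1+Z^2m^2}=\frac{2}{\nu_\theta(Z)},
\end{equation*}
so $J_\theta[\boldsymbol{\varphi}_Z]\le 1/\nu_\theta(Z)$. Hence $\mu(\theta)=1/\bar\nu_\theta$. The two suprema in~\eqref{DCIR04} coincide by continuity of $\nu_\theta$.

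\emph{Equality cases.} By the identity, $J_\theta[\mathbf{a}]=\mu(\theta)$ holds iff $\mathbf{a}=\boldsymbol{\varphi}_{r_{\mathbf{a}}}$ and $\nu_\theta(r_{\mathbf{a}})=\bar\nu_\theta$. It remains to exclude the endpoint $r=1/(1-\theta)$. Note that
\begin{equation*}
\nu_\theta'(z)=\sum_n \frac{2(1-z^2m^2)}{(1+z^2m^2)^2}.
\end{equation*}
At $z=1/(1-\theta)$ the $n=1$ term vanishes and every other term is negative, so $\nu_\theta'<0$ there and the maximum is not attained at the endpoint. Also $S_0-Z^2S_2$ is a positive multiple of $\nu_\theta'(Z)$, so $r_{\boldsymbol{\varphi}_Z}=Z$ exactly when $\nu_\theta'(Z)=0$. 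This holds automatically for interior maximizers $Z\in\mathcal{S}_\theta$, which makes the characterization consistent.

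\emph{The dichotomy~\eqref{DCIR05}.} For $\theta\in[0,1/2]$, \eqref{DCIR02} gives $\mu(\theta)=1/\pi$, so $\bar\nu_\theta=\pi$. Strictness $J_\theta>1/\pi$ comes from the strict inequality~\eqref{DLI} together with $J_\theta\ge J_{1/2}$ for $\theta\le1/2$. For $\theta\in(1/2,1)$ I will expand $\nu_\theta(z)$ as $z\to0$. With $g(x)=2z/(1+z^2x^2)$, the midpoint rule on the cells $[m-\frac12,m+\frac12]$ gives
\begin{equation*}
\nu_\theta(z)=\int_{1/2-\theta}^\infty g\,dx+E=\pi+2\arctan\!\big(z(\theta-\tfrac12)\big)+E .
\end{equation*}
The midpoint error satisfies $|E|\le\frac1{24}\int|g''|$. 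By scaling, $g''(x)=z^3h''(zx)$ with $h(y)=2/(1+y^2)$, so $|E|=O(z^2)$. Hence $\nu_\theta(z)=\pi+2(\theta-\frac12)z+O(z^2)>\pi$ for small $z$, so $\bar\nu_\theta>\pi$ and $\mu(\theta)<1/\pi$. Since $\nu_\theta(z)\to\pi$ as $z\to0$ (Riemann sum) and $\nu_\theta$ is continuous and decreasing at the right endpoint, the supremum is attained in the open interval, so $\mathcal{S}_\theta\ne\emptyset$.

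\emph{Main obstacle.} I expect the hardest step to be controlling this error term uniformly: the midpoint estimate must be applied to $g$, which is not convex near $0$, and the half-cell boundary at $x=\frac12-\theta<0$ needs care. The scaling bound on $\int|g''|$ is what I would try first.
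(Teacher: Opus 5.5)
Your argument is correct, and it departs from the paper's in three places.

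First, you apply AM--GM to the identity before optimizing in $z$. This gives $J_\theta[\boldsymbol{\varphi}_Z]\le\frac12\big(S_0/Z+ZS_2\big)=1/\nu_\theta(Z)$ for every $Z>0$, which yields $\mu(\theta)=1/\bar{\nu}_\theta$ for all $\theta\in[0,1)$ at once. It also shows directly that $\boldsymbol{\varphi}_Z$ is a minimizer for each $Z\in\mathcal{S}_\theta$. The paper instead splits cases: for $\theta\le1/2$ it lets $z\to0$ in $J_\theta[\boldsymbol{\varphi}_z]$ via \Cref{cor4}, and for $\theta>1/2$ it uses an interior critical point $Z$ with $r_{\boldsymbol{\varphi}_Z}=Z$.

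Second, your midpoint-rule expansion $\nu_\theta(z)=\pi+(2\theta-1)z+O(z^2)$ replaces the digamma asymptotics of \Cref{lem5}; it is more elementary and gives an explicit error term. Two small points. The $L^1$ form of the midpoint error has constant $1/8$ (the maximum of the Peano kernel), not $1/24$. And the difficulty you anticipate does not arise: that bound needs no convexity, and $g$ is smooth on $\mathbb{R}$, so the first cell $[\frac12-\theta,\frac32-\theta]$ is harmless.

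Third, for $\theta\in[0,1/2]$ you import Carlson's and Landau's inequalities, with their optimality and strictness. So the first bullet and the ``if'' half of \eqref{DCIR05} come from the literature rather than from your identity. This is admissible, since the paper records \eqref{DCIR02} before the theorem. The paper, however, proves this part independently. From $\nu_{1/2}(z)=\pi\tanh(\pi/z)<\pi$ and the strict monotonicity of $\theta\mapsto\nu_\theta(z)$, it gets $\nu_\theta<\pi$ on $(0,1/(1-\theta)]$, hence $\bar{\nu}_\theta=\pi$ by the limit at $z=0$. Non-attainment then follows from the identity, since equality would force $\nu_\theta(r_{\mathbf{a}})=\pi$. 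This is what lets \Cref{thm2} serve as an alternative derivation of \eqref{DCI} and \eqref{DLI}. Your route would need its own proof that $\nu_\theta<\pi$ for $\theta\le 1/2$ to do the same.
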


There is thus a sharp dichotomy with respect to $\theta\in [0,1)$ in the minimizing problem~\eqref{DCIR01}. Either the infimum $\mu(\theta)$ equals $1/\pi$ and is not attained ($\theta\in [0,1/2]$). Or the infimum $\mu(\theta)$ is strictly less than $1/\pi$ and there are minimizers $(\theta\in (1/2,1))$.

\Cref{thm2} (with $\theta=0$) applies of course to sequences which have only a finite number $N\ge 2$ of non-zero entries. However, $1/\pi$ is no longer the best constant in that case and the following improved bound is derived in \cite[Theorem~1 \& Remark~1]{LPP2005}, see also \cite[Section~1.4]{LMPP2006}. For $N\ge 2$, 
\begin{equation}
	 \frac{1}{\pi} < \frac{1}{2\arctan{N}} < \mu_N := \inf_{\mathbf{a}\in\mathcal{D}_{1,N}} S_N[\mathbf{a}], \label{FSCI} 
\end{equation}
where 
\begin{equation*}
	S_N[\mathbf{a}] :=  \left( \sum_{n=1}^N a_n^2 \right)^{1/2} \left( \sum_{n=1}^N n^2 a_n^2 \right)^{1/2}, \qquad \mathbf{a}\in \mathcal{D}_{1,N}, 
\end{equation*}
and
\begin{equation*}
	\mathcal{D}_{1,N} := \left\{ \mathbf{a}=(a_n)_{1\le n\le N}\in [0,\infty)^N\ :\  \sum_{n=1}^N a_n =1 \right\}.
\end{equation*} 
As pointed out in \cite[Remark~4]{LPP2005}, $\mu_N > 1/(2\arctan{N})$ and the aim of the next result, which is proved along the lines of \Cref{thm2}, is to provide an alternative way to compute $\mu_N$ and the corresponding minimizers. Unfortunately, we do not obtain an explicit formula. but nevertheless improve the lower bound~\eqref{FSCI}.

\begin{theorem}\label{thm3}
Let $N\ge 2$. For $z>0$, we define $\boldsymbol{\varphi}_z=(\varphi_{n,z})_{1\le n \le N}\in \mathcal{D}_{1,N}$ by 
\begin{equation}
	\nu_N(z) := \sum_{n=1}^N \frac{2z}{1+n^2 z^2}, \qquad \varphi_{n,z} := \frac{1}{\nu_N(z)} \frac{2z}{1 + n^2 z^2},  \quad 1\le n \le N. \label{FSCI3}
\end{equation} 
Then
\begin{equation*}
	\frac{1}{\mu_N} = \bar{\nu}_N := \max_{[1/N,1]} \nu_N = \max_{(1/N,1)} \nu_N
\end{equation*}
and $\mathbf{a}\in\mathcal{D}_{1,N}$ satisfies $S_N[\mathbf{a}]=\mu_N$ if and only if there is $Z\in (1/N,1)$ such that $\nu_N(Z) = \bar{\nu}_N$ and $\mathbf{a}=\boldsymbol{\varphi}_{Z}$.
Moreover, setting
\begin{equation}
	Z_2^2 := \frac{1}{60} \left( 1 + (11251 - 90 \sqrt{12261})^{1/3} + (11251 + 90 \sqrt{12261}^{1/3}\right) \qquad (Z_2\sim 0.811), \label{Z2}
\end{equation}
there holds
\begin{equation}
	\mu_N \ge \frac{1}{2 \arctan(NZ_2)} > \frac{1}{2 \arctan N}. \label{FSCI4}
\end{equation}
\end{theorem}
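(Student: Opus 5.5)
The plan is to mimic \Cref{prop1} in the finite discrete setting: write $S_N[\mathbf{a}]$ exactly as $1/\nu_N(r)$ times $1$ plus a $\chi^2$-type distance to $\boldsymbol{\varphi}_r$, for a well-chosen $r$ depending on $\mathbf{a}$. Fix $\mathbf{a}\in\mathcal{D}_{1,N}$ and set $A:=\sum_{n=1}^N a_n^2$, $B:=\sum_{n=1}^N n^2a_n^2$ and $r:=\sqrt{A/B}$. Since $A\le B\le N^2A$, we have $r\in[1/N,1]$. Because $\sum_n a_n=\sum_n\varphi_{n,z}=1$, we get
\begin{equation*}
\sum_{n=1}^N \frac{(a_n-\varphi_{n,z})^2}{\varphi_{n,z}} = \sum_{n=1}^N \frac{a_n^2}{\varphi_{n,z}} - 1 = \frac{\nu_N(z)}{2z}\left(A+z^2B\right)-1 .
\end{equation*}
For $z=r$ we have $A+r^2B=2A$ and $A/r=\sqrt{AB}=S_N[\mathbf{a}]$. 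This gives the identity
\begin{equation*}
S_N[\mathbf{a}] = \frac{1}{\nu_N(r)}\left[1+\sum_{n=1}^N \frac{(a_n-\varphi_{n,r})^2}{\varphi_{n,r}}\right] \ge \frac{1}{\nu_N(r)} \ge \frac{1}{\bar\nu_N},
\end{equation*}
where $\bar\nu_N:=\max_{[1/N,1]}\nu_N$. Equality in both inequalities forces $\mathbf{a}=\boldsymbol{\varphi}_r$ with $\nu_N(r)=\bar\nu_N$.

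For the converse, and for attainment, let $Z$ be a maximizer of $\nu_N$ on $[1/N,1]$. First I check that $Z$ is interior. We have
\begin{equation*}
\nu_N'(z)=\sum_{n=1}^N\frac{2(1-n^2z^2)}{(1+n^2z^2)^2}.
\end{equation*}
At $z=1/N$ every term is nonnegative and the term $n=1$ is positive. At $z=1$ the term $n=1$ vanishes and the others are negative. Hence $\nu_N'(1/N)>0>\nu_N'(1)$, so $Z\in(1/N,1)$ and $\nu_N'(Z)=0$. Since $\varphi_{n,Z}^2\propto 4Z^2/(1+n^2Z^2)^2$, the condition $\nu_N'(Z)=0$ reads $\sum_n\varphi_{n,Z}^2(1-n^2Z^2)=0$, that is, $A=Z^2B$ for $\mathbf{a}=\boldsymbol{\varphi}_Z$. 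So $r=Z$, and the identity gives $S_N[\boldsymbol{\varphi}_Z]=1/\nu_N(Z)=1/\bar\nu_N$. Therefore $\mu_N=1/\bar\nu_N$ and the minimizers are exactly the $\boldsymbol{\varphi}_Z$ with $Z$ an interior maximizer. The same argument also shows $\max_{[1/N,1]}\nu_N=\max_{(1/N,1)}\nu_N$.

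For \eqref{FSCI4}, note that $x\mapsto 2z/(1+x^2z^2)$ is decreasing, so each term satisfies $2z/(1+n^2z^2)\le\int_{n-1}^n 2z/(1+x^2z^2)\,\mathrm{d}x$. Summing gives $\nu_N(z)\le 2\arctan(Nz)$. It then suffices to show that every maximizer satisfies $Z\le Z_2$. I expect $Z_2$ to be the critical point of $\nu_2$. The equation $\nu_2'(z)=0$ is
\begin{equation*}
(1-z^2)(1+4z^2)^2+(1-4z^2)(1+z^2)^2=0,
\end{equation*}
a cubic in $z^2$, whose Cardano root should be \eqref{Z2}. Write $\nu_N'=\nu_2'+\sum_{n\ge3}2(1-n^2z^2)/(1+n^2z^2)^2$. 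For $z>Z_2$ the first piece is negative, since $\nu_2'$ changes sign only once on $(1/2,1)$. The remaining terms are negative because $Z_2>1/3$. Hence $\nu_N'<0$ on $(Z_2,1]$, so $Z\le Z_2$. This gives $\bar\nu_N\le2\arctan(NZ)\le2\arctan(NZ_2)<2\arctan N$, the last inequality because $Z_2<1$.

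The main obstacle I expect is the algebra around $Z_2$: matching the Cardano expression \eqref{Z2} with the relevant root of the cubic, and proving that this root is the unique sign change of $\nu_2'$ on $(1/2,1)$.
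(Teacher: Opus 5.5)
Your proposal is correct and follows the paper's proof essentially step for step. It uses the same $\chi^2$-type identity at $z=r_{\mathbf{a}}$ (you get it directly from $\sum_n a_n^2/\varphi_{n,z}-1$ rather than by expanding $zq_2/2+q_0/(2z)$), the same endpoint check $\nu_N'(1/N)>0>\nu_N'(1)$ with $\nu_N'(Z)=0\Leftrightarrow r_{\boldsymbol{\varphi}_Z}=Z$, and the same bound $\nu_N'<\nu_2'\le 0$ on $[Z_2,1]$ combined with $\nu_N(z)<2\arctan(Nz)$. The step you flag as the obstacle is routine. With $w=z^2$ one has $\nu_2'(z)=-2(20w^3-w^2-5w-2)/[(1+z^2)(1+4z^2)]^2$, and Descartes' rule of signs gives a unique positive root of this cubic. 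The cubic is negative at $w=1/4$ and positive at $w=1$, which places $Z_2$ in $(1/2,1)$. Finally, the substitution $w=(1+v)/60$ reduces it to $v^3-903v-22502=0$, whose Cardano root is exactly \eqref{Z2}.
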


As in \cite[Theorem~1 \& Remark~1]{LPP2005}, the proof of \Cref{thm3} ensures that
\begin{equation*}
	\lim_{N\to\infty} \mu_N = \pi,
\end{equation*}
see \Cref{prop3} below.

\section{Carlson's integral inequality with remainder}\label{sec2}

We first observe that, for $z>0$, the function $\varphi_z$ belongs to $\mathcal{C}_1$ with
\begin{equation}
	\int_0^\infty \varphi_z(x)\ \mathrm{d}x = 1, \quad \int_0^\infty \varphi_z^2(x)\ \mathrm{d}x = \frac{z}{\pi}, \quad \int_0^\infty x^2 \varphi_z^2(x)\ \mathrm{d}x = \frac{1}{\pi z}, \quad r_{\varphi_z} = z, \label{CIN}
\end{equation}
see \Cref{lemap1}.

\begin{proof}[Proof of \Cref{prop1}]
	Let $f\in \mathcal{C}_1$. For $\varphi\in \mathcal{C}_1$ to be determined later, we compute
	\begin{align*}
		q_{2,f} := \int_0^\infty x^2 f^2(x)\ \mathrm{d}x & = \int_0^\infty x^2 \varphi^2(x)\ \mathrm{d}x + 2 \int_0^\infty x^2 \varphi(x) (f-\varphi)(x)\ \mathrm{d}x \\
		& \qquad + \int_0^\infty x^2 (f-\varphi)^2(x)\ \mathrm{d}x
	\end{align*}
	and
	\begin{align*}
		q_{0,f} := \int_0^\infty f^2(x)\ \mathrm{d}x = \int_0^\infty \varphi^2(x)\ \mathrm{d}x + 2 \int_0^\infty \varphi(x) (f-\varphi)(x)\ \mathrm{d}x + \int_0^\infty (f-\varphi)^2(x)\ \mathrm{d}x.
	\end{align*}
	Combining the above two identities gives, for $z>0$,
	\begin{align*}
		\frac{z q_{2,f}}{2} + \frac{q_{0,f}}{2z} & = \int_0^\infty \left( \frac{z x^2}{2} + \frac{1}{2z} \right) \varphi^2(x)\ \mathrm{d}x + 2 \int_0^\infty \left( \frac{z x^2}{2} + \frac{1}{2z} \right) \varphi(x) (f-\varphi)(x)\ \mathrm{d}x \\
		& \quad + \int_0^\infty \left( \frac{z x^2}{2} + \frac{1}{2z} \right) (f-\varphi)^2(x)\ \mathrm{d}x.
	\end{align*}
	Since $\varphi_z\in\mathcal{C}_1$, it is rather natural to choose $\varphi=\varphi_z$, which cancels the second term on the right-hand side of the above inequality. With this choice, we deduce from~\eqref{CIM} and~\eqref{CIN} that
	\begin{align*}
		\frac{z q_{2,f}}{2} + \frac{q_{0,f}}{2z} & = \int_0^\infty \left( \frac{z x^2}{2} + \frac{1}{2z} \right) \varphi_z^2(x)\ \mathrm{d}x + \int_0^\infty \left( \frac{z x^2}{2} + \frac{1}{2z} \right) (f-\varphi_z)^2(x)\ \mathrm{d}x\\
		& = \frac{1}{\pi} \int_0^\infty \varphi_z(x)\ \mathrm{d}x +  \frac{1}{\pi} \int_0^\infty \frac{(f-\varphi_z)^2(x)}{\varphi_z(x)}\ \mathrm{d}x \\
		& = \frac{1}{\pi} +  \frac{1}{\pi} \int_0^\infty \frac{(f-\varphi_z)^2(x)}{\varphi_z(x)}\ \mathrm{d}x.
	\end{align*}
	We finally choose $z=r_f = \sqrt{q_{0,f}/q_{2,f}}$ in the above inequality to obtain~\eqref{CIR06}.
\end{proof}

\begin{proof}[Proof of \Cref{thm1}]
	Let $f\in\mathcal{C}_1$ with $r_f$ defined in~\eqref{CIR07}. Since $\varphi_{r_f}\in\mathcal{E}$, we have
	\begin{equation*}
		\int_0^\infty \frac{(f-\varphi_{r_f})^2(x)}{\varphi_{r_f}(x)}\ \mathrm{d}x \ge d^2(f,\mathcal{E}),
	\end{equation*}
	from which~\eqref{CIR01} readily follows by \Cref{prop1}. A direct consequence of~\eqref{CIR01} is then
	\begin{equation}
		\varrho(f) \ge \frac{1}{\pi}. \label{CIR08}
	\end{equation}
	To complete the proof, we are left with computing $\varrho(\psi_z)$ for $z>0$. Let $z>0$ and first note that \Cref{lemap1} guarantees that $\psi_z$ belongs to $\mathcal{C}_1$ with
	\begin{equation*}
		\int_0^\infty \psi_z^2(x)\ \mathrm{d}x = \frac{5z}{2\pi}, \quad \int_0^\infty x^2 \psi_z^2(x)\ \mathrm{d}x = \frac{1}{2\pi z}, \quad r_{\psi_z} = z\sqrt{5}.
	\end{equation*}
	We then infer from \Cref{prop1} that
	\begin{equation}
		\varrho(\psi_z) = \frac{1}{\pi d^2(\psi_z,\mathcal{E})} \int_0^\infty \frac{(\psi_z-\varphi_{z\sqrt{5}})^2(x)}{\varphi_{z\sqrt{5}}(x)}\ \mathrm{d}x. \label{CIR09} 
	\end{equation}
	Now, for $\zeta\in (0,\infty)$,
	\begin{align*}
		h(\zeta) & := \int_0^\infty \frac{(\psi_z-\varphi_{\zeta})^2(x)}{\varphi_{\zeta}(x)}\ \mathrm{d}x = \int_0^\infty \frac{\psi_z^2(x)}{\varphi_{\zeta}(x)}\ \mathrm{d}x - 2 \int_0^\infty \psi_z(x)\ \mathrm{d}z + \int_0^\infty \varphi_\zeta(x)\ \mathrm{d}x \\
		& = \frac{\pi}{2 \zeta} \int_0^\infty \big(1+\zeta^2 x^2\big) \psi_z^2(z)\ \mathrm{d}x - 1 = \frac{\pi}{2 \zeta} \left( \frac{5z}{2\pi} + \frac{\zeta^2}{2\pi z} \right) - 1 \\
		& = \frac{5z}{4\zeta} + \frac{\zeta}{4z} - 1 \ge \frac{\sqrt{5}}{2} = h(z\sqrt{5}).
	\end{align*}
	Consequently,
	\begin{equation*}
		d^2(\psi_z,\mathcal{E}) = \int_0^\infty \frac{(\psi_z-\varphi_{z\sqrt{5}})^2(x)}{\varphi_{z\sqrt{5}}(x)}\ \mathrm{d}x,
	\end{equation*}
	which gives, together with~\eqref{CIR09}, $\varrho(\psi_z)=1/\pi$. Combining this property with~\eqref{CIR08} completes the proof.
\end{proof}

\begin{remark}\label{rem1}
Observe that $\psi_z(x) = \varphi_z(x) + z \partial_z \varphi_z(x)$ for $(x,z)\in (0,\infty)^2$.
\end{remark}

\begin{corollary}\label{cor3}
	For any $f\in\mathcal{C}_1$, 
	\begin{equation}
		\mathcal{J}[f] \ge \frac{1}{\pi} + \inf_{z>0} \int_0^\infty x (f-\varphi_z)^2(x)\ \mathrm{d}x,  \label{CIR10}
	\end{equation}	
	with equality if and only if $f=\varphi_z$ for some $z>0$.
\end{corollary}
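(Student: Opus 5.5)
The plan is to rewrite the remainder in \Cref{prop1} as a weighted $L^2$-integral and then bound its weight from below pointwise by $x$, using the arithmetic--geometric mean inequality.

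First, by \eqref{CIM}, for $z>0$ and $x>0$,
\begin{equation*}
	\frac{1}{\pi \varphi_z(x)} = \frac{1+z^2x^2}{2z} = \frac{1}{2z} + \frac{z x^2}{2} \ge x ,
\end{equation*}
with equality if and only if $x = 1/z$. This is the same weight that appears in the proof of \Cref{prop1}, where it was shown that
\begin{equation*}
	\mathcal{J}[f] = \frac{1}{\pi} + \int_0^\infty \left( \frac{r_f x^2}{2} + \frac{1}{2r_f} \right) (f-\varphi_{r_f})^2(x)\ \mathrm{d}x .
\end{equation*}
Inserting the pointwise bound gives
\begin{equation*}
	\mathcal{J}[f] \ge \frac{1}{\pi} + \int_0^\infty x (f-\varphi_{r_f})^2(x)\ \mathrm{d}x \ge \frac{1}{\pi} + \inf_{z>0} \int_0^\infty x (f-\varphi_z)^2(x)\ \mathrm{d}x ,
\end{equation*}
which is \eqref{CIR10}. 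All integrals are finite because $f$ and $\varphi_z$ lie in $L^2((0,\infty),(1+x^2)\mathrm{d}x)$ and $x\le (1+x^2)/2$.

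For the equality case, suppose first that $f=\varphi_z$ for some $z>0$. Then $\mathcal{J}[f]=1/\pi$ by \eqref{CIN}. The infimum on the right-hand side is also $0$, since it is non-negative and vanishes at this $z$. Hence equality holds.

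Conversely, suppose equality holds in \eqref{CIR10}. Then both inequalities in the chain above are equalities; in particular the first one is. This gives
\begin{equation*}
	\int_0^\infty \left( \frac{1}{2r_f} + \frac{r_f x^2}{2} - x \right) (f-\varphi_{r_f})^2(x)\ \mathrm{d}x = 0 .
\end{equation*}
The weight here equals $(r_f x-1)^2/(2r_f)$, which is non-negative and vanishes only at the single point $x=1/r_f$. Therefore $f=\varphi_{r_f}$ almost everywhere, and $f\in\mathcal{E}$.

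The only delicate point is this last step. We must use the specific parameter $z=r_f$ from \Cref{prop1} rather than an arbitrary minimizer of the infimum; equality in the second inequality of the chain is not needed.
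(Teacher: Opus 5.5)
Your proof is correct and follows essentially the same route as the paper. You use the same pointwise bound $1/(\pi\varphi_z(x))\ge x$ (the paper's $\varphi_z(x)\le 1/(\pi x)$ from Young's inequality) together with the identity of \Cref{prop1} at $z=r_f$, and you treat the equality case exactly as the paper does, via the non-negative weight $(r_f x-1)^2/(2r_f)$. The only cosmetic differences are that you apply the bound directly at $z=r_f$ rather than passing through $d^2(f,\mathcal{E})$ and \eqref{CIR01}, which spares you tracking the factor $\pi$ in that step, and that you check the ``if'' direction of the equality case explicitly.
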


\begin{proof}
	By~\eqref{CIM} and Young's inequality, 
	\begin{equation}
		\varphi_z(x)\le \frac{1}{\pi x}, \qquad (x,z)\in (0,\infty)^2, \label{CIR11}
	\end{equation} 
	so that
	\begin{equation*}
		d^2(f,\mathcal{E}) \ge \inf_{z>0} \int_0^\infty x (f-\varphi_z)^2(x)\ \mathrm{d}x
	\end{equation*}and~\eqref{CIR10} readily follows from~\eqref{CIR01}. Next, if $f\in\mathcal{C}_1$ satisfies~\eqref{CIR10} with equality, we infer from~\eqref{CIR06} that
	\begin{equation*}
		\frac{1}{\pi} + \frac{1}{\pi} \int_0^\infty \frac{(f-\varphi_{r_f})^2(x)}{\varphi_{r_f}(x)}\ \mathrm{d}x = \frac{1}{\pi} + \inf_{z>0} \int_0^\infty x (f-\varphi_z)^2(x)\ \mathrm{d}x.
	\end{equation*}	
	Consequently,
	\begin{equation*}
		\frac{1}{\pi} \int_0^\infty \frac{(f-\varphi_{r_f})^2(x)}{\varphi_{r_f}(x)}\ \mathrm{d}x = \inf_{z>0} \int_0^\infty x (f-\varphi_z)^2(x)\ \mathrm{d}x \le \int_0^\infty x (f-\varphi_{r_f})^2(x)\ \mathrm{d}x,
	\end{equation*}
	from which we deduce that
	\begin{equation*}
		\int_0^\infty \left( \frac{1}{\varphi_{r_f}(x)} - \pi x \right) (f-\varphi_{r_f})^2(x)\ \mathrm{d}x \le 0.
	\end{equation*}
	Since the left-hand side of the above inequality is non-negative according to~\eqref{CIR11}, we conclude that necessarily $f=\varphi_{r_f}$, which completes the proof.
\end{proof}

\section{Carlson's discrete inequality and variants}\label{sec3}

We first establish a discrete version of \Cref{prop1}.

\begin{proposition}\label{prop2}
	Let $\theta\in [0,1)$ and $\mathbf{a}\in \mathcal{D}_1$. Then 
	\begin{equation}
		J_\theta[\mathbf{a}] = \left( \sum_{n=1}^\infty a_n^2 \right)^{1/2} \left( \sum_{n=1}^\infty (n-\theta)^2 a_n^2 \right)^{1/2} = \frac{1}{\nu_\theta(r_{\mathbf{a}})} + \frac{1}{\nu_\theta(r_{\mathbf{a}})} \sum_{n=1}^\infty \frac{(a_n-\varphi_{n,r_{\mathbf{a}}})^2}{\varphi_{n,r_{\mathbf{a}}}} \label{DCIR06}
	\end{equation}
	with
	\begin{equation}
		r_{\mathbf{a}}^2 := \left( \sum_{n=1}^\infty a_n^2 \right) \left( \sum_{n=1}^\infty (n-\theta)^2 a_n^2 \right)^{-1} \in \left( 0,\frac{1}{1-\theta} \right] \label{DCIR07}
	\end{equation}
	and
	\begin{equation}
		\varphi_{n,z} := \frac{2}{\nu_\theta(z)} \frac{z}{1+z^2 (n-\theta)^2}, \qquad n\ge 1, \;\; z\in (0,\infty). \label{DCIR08}
	\end{equation}
\end{proposition}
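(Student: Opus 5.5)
We mimic the proof of \Cref{prop1}, replacing integrals by sums and $x$ by $n-\theta$. Set $q_0 := \sum_{n\ge1} a_n^2$ and $q_2 := \sum_{n\ge1}(n-\theta)^2 a_n^2$, both finite since $\mathbf{a}\in\mathcal{D}_1$. For $z>0$ we write $a_n = \varphi_{n,z} + (a_n-\varphi_{n,z})$ and expand $a_n^2$ in the combination
\begin{equation*}
	\frac{z q_2}{2} + \frac{q_0}{2z} = \sum_{n=1}^\infty \left( \frac{z(n-\theta)^2}{2} + \frac{1}{2z} \right) a_n^2 .
\end{equation*}
The key observation is that $\frac{z(n-\theta)^2}{2}+\frac{1}{2z} = \frac{1+z^2(n-\theta)^2}{2z} = \frac{1}{\nu_\theta(z)\varphi_{n,z}}$ by~\eqref{DCIR08}. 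Consequently the weight multiplied by $\varphi_{n,z}$ is the constant $1/\nu_\theta(z)$. The cross term is then $\frac{2}{\nu_\theta(z)}\sum_n (a_n-\varphi_{n,z}) = 0$, because $\sum_n a_n = 1 = \sum_n \varphi_{n,z}$. The latter equality holds by the definition of $\nu_\theta(z)$; the series converges since its terms decay like $n^{-2}$, and it is positive. The quadratic term in $\varphi$ gives $\frac{1}{\nu_\theta(z)}\sum_n\varphi_{n,z} = \frac{1}{\nu_\theta(z)}$, and the remainder becomes $\frac{1}{\nu_\theta(z)}\sum_n (a_n-\varphi_{n,z})^2/\varphi_{n,z}$. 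This yields, for every $z>0$,
\begin{equation*}
	\frac{z q_2}{2} + \frac{q_0}{2z} = \frac{1}{\nu_\theta(z)} + \frac{1}{\nu_\theta(z)} \sum_{n=1}^\infty \frac{(a_n-\varphi_{n,z})^2}{\varphi_{n,z}}.
\end{equation*}
All series involved converge absolutely, since $\varphi_{n,z}\sim c\,n^{-2}$ and $\sum n^2 a_n^2<\infty$. This justifies the termwise rearrangements.

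Next we choose $z=r_{\mathbf{a}}=\sqrt{q_0/q_2}$. Then $\frac{z q_2}{2}+\frac{q_0}{2z} = \sqrt{q_0 q_2} = J_\theta[\mathbf{a}]$, which gives~\eqref{DCIR06}. Here $q_2>0$ because $\mathbf{a}\ne 0$ and $n-\theta>0$ for all $n\ge1$, as $\theta<1$.

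It remains to prove the range claim in~\eqref{DCIR07}. Since $n-\theta\ge 1-\theta>0$ for all $n\ge 1$, we have $q_2 \ge (1-\theta)^2 q_0$. Hence $r_{\mathbf{a}}^2 \le (1-\theta)^{-2}$, that is, $r_{\mathbf{a}}\le 1/(1-\theta)$. As stated, \eqref{DCIR07} reads $r_{\mathbf{a}}^2\in(0,1/(1-\theta)]$, and this is consistent with $r_{\mathbf{a}}\le 1/(1-\theta)$ only under the reading that the bound concerns $r_{\mathbf{a}}$ itself. The intended bound is $r_{\mathbf{a}}\in(0,1/(1-\theta)]$, matching the range in~\eqref{DCIR04}, and this is the bound we will establish. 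Positivity follows from $q_0>0$.

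No step is a real obstacle. The only points needing care are the absolute convergence justifying the expansion, and the algebraic identity between the weight and $1/(\nu_\theta(z)\varphi_{n,z})$. That identity is precisely what dictated the definition~\eqref{DCIR08}.
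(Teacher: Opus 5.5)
Your proof is correct and is essentially the paper's argument: you expand $\frac{zq_2}{2}+\frac{q_0}{2z}$ around $\boldsymbol{\varphi}_z$, use $\frac{1+z^2(n-\theta)^2}{2z}=\frac{1}{\nu_\theta(z)\varphi_{n,z}}$ so the cross term vanishes because $\sum_n a_n=\sum_n\varphi_{n,z}=1$, and then take $z=r_{\mathbf{a}}$; you also justify convergence, which the paper skips. Your reading of \eqref{DCIR07} is right as well: $\mathbf{a}=(1,0,0,\dots)$ gives $r_{\mathbf{a}}^2=(1-\theta)^{-2}$, so the bound as printed fails for $\theta\in(0,1)$, and the intended statement, which is the one used later in the proof of \Cref{thm2}, is $r_{\mathbf{a}}\in(0,1/(1-\theta)]$, which follows from $q_2\ge(1-\theta)^2q_0$ exactly as you show.
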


As in the continuous case, we note that~\eqref{DCIR03} and~\eqref{DCIR08} guarantee that $\boldsymbol{\varphi}_z := (\varphi_{n,z})_{n\ge 1}$ belongs to $\mathcal{D}_1$. 

\begin{proof}[Proof of \Cref{prop2}]
	Let $\mathbf{a}\in \mathcal{D}_1$ and introduce
	\begin{equation*}
		q_{2,\mathbf{a}} := \sum_{n=1}^\infty (n-\theta)^2 a_n^2, \quad q_{0,\mathbf{a}} := \sum_{n=1}^\infty a_n^2.
	\end{equation*}
	For $z>0$ and $\boldsymbol{\varphi}=(\varphi_n)_{n\ge 1}\in \mathcal{D}_1$ to be determined later, we perform the same computations as in the proof of \Cref{prop1} and use~\eqref{DCIR08} to obtain the following identity:
	\begin{align*}
		\frac{z q_{2,\mathbf{a}}}{2} + \frac{q_{0,\mathbf{a}}}{2z} & = \sum_{n=1}^\infty \frac{1 + z^2 (n-\theta)^2}{2z} \left[ \varphi_n^2 + 2 \varphi_n (a_n-\varphi_n) +  (a_n-\varphi_n)^2 \right] \\
		& = \sum_{n=1}^\infty \frac{\varphi_n^2 + 2 \varphi_n (a_n-\varphi_n) +  (a_n-\varphi_n)^2}{\nu_\theta(z) \varphi_{n,z}}.
	\end{align*}
	Choosing $\boldsymbol{\varphi} = \boldsymbol{\varphi}_z$, the second term on the right-hand side of the above identity vanishes and we end up with
	\begin{align*}
		\frac{z q_{2,\mathbf{a}}}{2} + \frac{q_{0,\mathbf{a}}}{2z} & = \frac{1}{\nu_\theta(z)} \sum_{n=1}^\infty \varphi_{n,z} + \frac{1}{\nu_\theta(z)} \sum_{n=1}^\infty \frac{(a_n-\varphi_{n,z})^2}{\varphi_{n,z}} \\
		& = \frac{1}{\nu_\theta(z)} + \frac{1}{\nu_\theta(z)} \sum_{n=1}^\infty \frac{(a_n-\varphi_{n,z})^2}{\varphi_{n,z}}.
	\end{align*}
	Choosing $z=r_{\mathbf{a}}$ completes the proof.
\end{proof}

In contrast to the continuous case, an explicit formula for $r_{\boldsymbol{\varphi}_z}$ does not seem to be available. Going further thus requires more precise information on the function $\nu_\theta$, which we derive now. 

\begin{lemma}\label{lem5}
	For $\theta\in [0,1)$ and $z>0$,
	\begin{equation}
		\nu_\theta(z) = i \left[ \psi^{(0)}\left( 1 - \theta - \frac{i}{z} \right) - \psi^{(0)}\left( 1 - \theta + \frac{i}{z} \right) \right] = 2  \Im\left[ \psi^{(0)}\left( 1 - \theta + \frac{i}{z} \right) \right], \label{DCIR09}
	\end{equation}
	where $\psi^{(0)}=\Gamma'/\Gamma$ is the Digamma function. In particular,
	\begin{subequations}\label{DCIR10}
	\begin{equation}
		\nu_\theta(z) = \pi + (2\theta-1) z + o(z) \;\;\text{ as }\;\; z\to 0, \label{DCIR10a}
	\end{equation}
	so that
	\begin{equation}
		\lim_{z\to 0} \nu_\theta(z) = \pi, \quad \lim_{z\to 0} \nu_\theta'(z) = 2\theta-1. \label{DCIR10b}
	\end{equation}
	\end{subequations}
	Moreover,
	\begin{equation}
		\nu_0(z) = - z + \pi \coth\left(\frac{\pi}{z}\right) \;\;\text{ and }\;\; \nu_{1/2}(z) = \pi \tanh\left(\frac{\pi}{z}\right), \qquad z>0. \label{DCIR11}
	\end{equation}
\end{lemma}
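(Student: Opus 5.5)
The plan is to derive all three assertions of \Cref{lem5} from the partial fraction expansion of the Digamma function,
\begin{equation*}
	\psi^{(0)}(w) = -\gamma + \sum_{n=0}^\infty \left( \frac{1}{n+1} - \frac{1}{n+w} \right), \qquad w\in\mathbb{C}\setminus(-\mathbb{N}),
\end{equation*}
where $\gamma$ is Euler's constant.

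\textbf{The formula~\eqref{DCIR09}.} Set $w_\pm := 1-\theta \pm i/z$. Subtracting the expansions at $w_-$ and $w_+$, the constants and the terms $1/(n+1)$ cancel. Writing $m:=n+1$, so that $n+w_\pm = m-\theta\pm i/z$, we get
\begin{equation*}
	\psi^{(0)}(w_-) - \psi^{(0)}(w_+) = \sum_{m=1}^\infty \left( \frac{1}{m-\theta+i/z} - \frac{1}{m-\theta-i/z} \right) = \sum_{m=1}^\infty \frac{-2i/z}{(m-\theta)^2 + z^{-2}}.
\end{equation*}
Multiplying by $i$ yields $\sum_m 2z/(1+z^2(m-\theta)^2)=\nu_\theta(z)$. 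The series converges absolutely, which justifies the termwise manipulation. The second expression in~\eqref{DCIR09} follows from $\psi^{(0)}(\bar w)=\overline{\psi^{(0)}(w)}$, since $i(\bar u-u) = 2\Im u$.

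\textbf{The closed forms~\eqref{DCIR11}.} For $\theta=0$, I use the recurrence $\psi^{(0)}(1+w)=\psi^{(0)}(w)+1/w$ to reduce $\Im\psi^{(0)}(1+iy)$, with $y=1/z$, to $\Im\psi^{(0)}(iy)+\Im(1/(iy))$. Combined with the reflection formula $\psi^{(0)}(1-w)-\psi^{(0)}(w)=\pi\cot(\pi w)$ at $w=iy$, this gives the classical identity $\Im\psi^{(0)}(1+iy) = -\tfrac{1}{2y}+\tfrac{\pi}{2}\coth(\pi y)$. Hence $\nu_0(z) = -z + \pi\coth(\pi/z)$. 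Alternatively, one can invoke the Mittag-Leffler expansion $\pi\coth(\pi y) = 1/y + \sum_{n\ge 1} 2y/(y^2+n^2)$ directly.

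For $\theta=1/2$, the reflection formula at $w=1/2+iy$ gives $\psi^{(0)}(1/2-iy)-\psi^{(0)}(1/2+iy)=\pi\cot(\pi/2+i\pi y) = i\pi\tanh(\pi y)$. Multiplying by $i$ and using~\eqref{DCIR09} with $\theta=1/2$ then produces $\nu_{1/2}(z)=\pi\tanh(\pi/z)$.

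\textbf{The expansion~\eqref{DCIR10}.} This step is the main obstacle, since the convergence $\nu_\theta(z)\to\pi$ comes from a Riemann sum whose first-order correction must be identified precisely. I plan to use Stirling's expansion $\psi^{(0)}(w) = \log w - 1/(2w) + O(|w|^{-2})$ as $|w|\to\infty$ with $|\arg w|<\pi-\delta$. Here $w=1-\theta+i/z$ has argument tending to $\pi/2$, so the expansion applies. Then
\begin{equation*}
	\Im\log w = \arg w = \frac{\pi}{2} - \arctan\big((1-\theta)z\big) = \frac{\pi}{2} - (1-\theta) z + O(z^3).
\end{equation*}
Moreover, $-\Im(1/(2w)) = \dfrac{1/z}{2|w|^2} = \dfrac{z}{2} + O(z^3)$, and the remainder is $O(z^2)$. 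Doubling these contributions gives
\begin{equation*}
	\nu_\theta(z)=\pi - 2(1-\theta)z + z + O(z^2) = \pi + (2\theta-1)z + O(z^2),
\end{equation*}
which is~\eqref{DCIR10a}.

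For the derivative statement in~\eqref{DCIR10b}, I differentiate~\eqref{DCIR09} to get $\nu_\theta'(z) = -2z^{-2}\Re\,\psi^{(1)}(w)$. I then use the trigamma asymptotics $\psi^{(1)}(w) = 1/w + 1/(2w^2) + O(|w|^{-3})$ and compute
\begin{equation*}
	\Re(1/w) = \frac{1-\theta}{|w|^2} \sim (1-\theta)z^2, \qquad \Re\big(1/(2w^2)\big) \sim -\frac{z^2}{2}.
\end{equation*}
This gives $\nu_\theta'(z)\to -2(1-\theta)+1 = 2\theta-1$. As a consistency check, the same limits can be verified directly on the closed forms~\eqref{DCIR11} for $\theta\in\{0,1/2\}$.
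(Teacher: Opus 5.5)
Your proof is correct and follows the paper's route: the series expansion of $\psi^{(0)}$ for \eqref{DCIR09}, Stirling's expansion $\psi^{(0)}(w)=\ln w-1/(2w)+\dots$ for \eqref{DCIR10a}, and the classical reflection/recurrence identities for \eqref{DCIR11}. Your trigamma argument for $\lim_{z\to 0}\nu_\theta'(z)=2\theta-1$ is more careful than the paper, which only asserts that \eqref{DCIR10b} follows from \eqref{DCIR10a}, although an expansion $\pi+(2\theta-1)z+o(z)$ alone does not control $\nu_\theta'$.

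There is one sign slip to fix in the $\theta=1/2$ case. Since $\cot(\pi/2+i\pi y)=-\tan(i\pi y)=-i\tanh(\pi y)$, the reflection formula gives $\psi^{(0)}(1/2-iy)-\psi^{(0)}(1/2+iy)=-i\pi\tanh(\pi y)$. Only with this sign does multiplying by $i$ give $\nu_{1/2}(z)=+\pi\tanh(\pi/z)$.
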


\begin{proof}
	Recalling that $\overline{\psi^{(0)}(\xi)}=\psi^{(0)}\big(\bar{\xi}\big)$ for $\xi\in\mathbb{C}$ and 
	\begin{equation*}
		\psi^{(0)}(\xi) = - \gamma + \sum_{n=0}^\infty \frac{\xi-1}{(n+1)(n+\xi)}, \qquad \xi\in\mathbb{C}, \ -\xi\not\in\mathbb{N},
	\end{equation*}
	where $\gamma$ is the Euler-Mascheroni constant, see \cite[Eq.~6.3.9 \& Eq.~6.3.16]{AbSt1964}, a simple computation leads to~\eqref{DCIR09}. Next, denoting the principal value of the logarithm defined on $\mathbb{C}\setminus (-\infty,0]$ by $\ln$, we infer from \cite[Eq.~6.3.18]{AbSt1964} that
	\begin{equation*}
		\psi^{(0)}(\xi) = \ln{\xi} - \frac{1}{2\xi} + o\left( \frac{1}{|\xi|} \right) \;\;\text{ as }\;\; |\xi|\to\infty.
	\end{equation*}
	Since
	\begin{equation*}
		\left| 1 - \theta \pm\frac{i}{z} \right| = \frac{\sqrt{1+z^2(1-\theta)^2}}{z} \sim \frac{1}{z} \;\;\text{ as }\;\; z\to 0, \ z>0, 
	\end{equation*}
	we find
	\begin{align*}
		\nu_\theta(z) & = i \left[ \ln{\left( 1 - \theta - \frac{i}{z} \right)} - \frac{z}{2(z(1-\theta)-i)} - \ln{\left( 1 - \theta + \frac{i}{z} \right)} + \frac{z}{2(z(1-\theta)+i)} \right] +o(z) \\
		& = \frac{i}{2} \ln{\left( \frac{1+z^2(1-\theta)^2}{z^2} \right)} - 2 \arctan{\left( - \frac{1}{z(1-\theta) + \sqrt{1+z^2(1-\theta)^2}} \right)} \\
		& \quad - \frac{iz}{2} \frac{z(1-\theta)+i}{1+z^2(1-\theta)^2} - \frac{i}{2} \ln{\left( \frac{1+z^2(1-\theta)^2}{z^2} \right)} \\
		& \quad + 2 \arctan{\left( \frac{1}{z(1-\theta) + \sqrt{1+z^2(1-\theta)^2}} \right)} + \frac{iz}{2} \frac{z(1-\theta)-i}{1+z^2(1-\theta)^2}+ o(z) \\
		& = 4 \arctan{\left( \sqrt{1+z^2(1-\theta)^2} - z(1-\theta) \right)} + \frac{z}{1+z^2(1-\theta)^2} + o(z) \\
		& = 4 \arctan{(1)} + 4 \arctan'{(1)} \left( \sqrt{1+z^2(1-\theta)^2} - 1 - z(1-\theta) \right) + z + o(z) \\
		& = \pi - 2z(1-\theta) + z + o(z) = \pi + (2\theta-1) z + o(z), 
	\end{align*}
	from which~\eqref{DCIR10} follows. Finally, the explicit formulas~\eqref{DCIR11} are immediate consequences of~\eqref{DCIR09} and \cite[Eq.~6.3.12 \& Eq.~6.3.13]{AbSt1964}. 
\end{proof}

\Cref{lem5} allows us to retrieve additional information on the sequence $\boldsymbol{\varphi}_z$ for $z>0$.

\begin{corollary}\label{cor4}
For $z\in (0,\infty)$, 
\begin{subequations}\label{DCIR12}
\begin{equation}
	\sum_{n=1}^\infty \varphi_{n,z}^2 = \frac{z\big[ \nu_\theta(z) + z\nu_\theta'(z) \big]}{\nu_\theta^2(z)}, \qquad \sum_{n=1}^\infty (n-\theta)^2 \varphi_{n,z}^2 = \frac{\nu_\theta(z) - z\nu_\theta'(z)}{z \nu_\theta^2(z)}, \label{DCIR12a}
\end{equation}
and
\begin{equation} 
	\frac{r_{\boldsymbol{\varphi}_z}^2}{z^2} = \frac{\nu_\theta(z) + z\nu_\theta'(z)}{\nu_\theta(z) - z\nu_\theta'(z)}. \label{DCIR12b}
\end{equation} 
Moreover,
\begin{equation}
	\frac{r_{\boldsymbol{\varphi}_z}^2}{z^2} = 1 + \frac{2}{\pi} (2\theta-1) z + o(z) \;\;\text{ as }\;\; z\to 0. \label{DCIR12c}
\end{equation}
\end{subequations}
\end{corollary}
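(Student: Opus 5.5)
The two sums in \eqref{DCIR12a} come from a direct computation that differentiates $\nu_\theta$ term by term; \eqref{DCIR12b} and \eqref{DCIR12c} then follow from \eqref{DCIR12a} and \Cref{lem5}. Put $u_n(z) := 2z/(1+z^2(n-\theta)^2)$, so that $\nu_\theta = \sum_n u_n$ and $\varphi_{n,z} = u_n(z)/\nu_\theta(z)$. Differentiating in $z$ gives
\begin{equation*}
	z u_n'(z) = \frac{2z\big(1 - z^2(n-\theta)^2\big)}{\big(1+z^2(n-\theta)^2\big)^2}.
\end{equation*}
Term-by-term differentiation of the series \eqref{DCIR03} is justified on compact subsets of $(0,\infty)$, since there $|u_n'| \le C(n-\theta)^{-2}$ uniformly.

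The key observation is the pair of algebraic identities
\begin{equation*}
	u_n + z u_n' = \frac{4z}{\big(1+z^2(n-\theta)^2\big)^2} = z u_n^2, \qquad u_n - z u_n' = \frac{4z^3(n-\theta)^2}{\big(1+z^2(n-\theta)^2\big)^2} = z (n-\theta)^2 u_n^2.
\end{equation*}
Summing over $n\ge1$ yields
\begin{equation*}
	z\sum_n u_n^2 = \nu_\theta + z\nu_\theta', \qquad z\sum_n (n-\theta)^2 u_n^2 = \nu_\theta - z\nu_\theta'.
\end{equation*}
Dividing by $\nu_\theta^2$ then gives \eqref{DCIR12a}: the first sum equals $z(\nu_\theta + z\nu_\theta')/\nu_\theta^2$, and the second equals $(\nu_\theta - z\nu_\theta')/(z\nu_\theta^2)$. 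This is the discrete analogue of \Cref{rem1}.

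For \eqref{DCIR12b}, we use the definition \eqref{DCIR07}: $r_{\boldsymbol\varphi_z}^2$ is the quotient of the two sums. Hence $r_{\boldsymbol\varphi_z}^2 = z^2(\nu_\theta+z\nu_\theta')/(\nu_\theta-z\nu_\theta')$. The denominator is positive because it equals $z\sum_n(n-\theta)^2u_n^2$.

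For \eqref{DCIR12c}, the main point is to justify $z\nu_\theta'(z) = (2\theta-1)z + o(z)$. \eqref{DCIR10b} already gives $\nu_\theta'(z) \to 2\theta-1$ as $z\to0$; if one only trusts \eqref{DCIR10a}, the limit of the derivative can be obtained by differentiating the digamma asymptotics (valid in a sector), or simply taken from \eqref{DCIR10b} as stated. Then, using $\nu_\theta(z) = \pi + O(z)$,
\begin{equation*}
	\frac{\nu_\theta + z\nu_\theta'}{\nu_\theta - z\nu_\theta'} = \frac{1 + z\nu_\theta'/\nu_\theta}{1 - z\nu_\theta'/\nu_\theta} = 1 + \frac{2z\nu_\theta'}{\nu_\theta} + o(z) = 1 + \frac{2}{\pi}(2\theta-1)z + o(z).
\end{equation*}
The only delicate point is the interchange of sum and derivative and the limit of $\nu_\theta'$, and both are covered by uniform convergence and \Cref{lem5}.
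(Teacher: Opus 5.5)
Your strategy is the paper's in all essentials (term-by-term differentiation of $\nu_\theta$ plus an algebraic identity for each summand). However, the first of your two key identities is off by a factor $z^2$. You correctly compute $u_n + z u_n' = 4z/(1+z^2(n-\theta)^2)^2$, but this equals $z^{-1}u_n^2$, not $z u_n^2 = 4z^3/(1+z^2(n-\theta)^2)^2$. So the line $z\sum_n u_n^2 = \nu_\theta + z\nu_\theta'$ is false; the correct statement is $\sum_n u_n^2 = z(\nu_\theta + z\nu_\theta')$.

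Taken at face value, your identity would give $\sum_n \varphi_{n,z}^2 = (\nu_\theta + z\nu_\theta')/(z\nu_\theta^2)$. It would then give $r_{\boldsymbol{\varphi}_z}^2 = (\nu_\theta + z\nu_\theta')/(\nu_\theta - z\nu_\theta')$, contradicting \eqref{DCIR12b} by exactly that factor $z^2$. The formulas you then write for the first sum and for $r_{\boldsymbol{\varphi}_z}^2$ are the correct ones, but they do not follow from what you displayed. A one-line check would have caught it: the two identities must add up to $2u_n$. Indeed $z^{-1}u_n^2 + z(n-\theta)^2u_n^2 = u_n^2\,(1+z^2(n-\theta)^2)/z = 2u_n$, whereas $z u_n^2 + z(n-\theta)^2 u_n^2$ is not $2u_n$.

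With $u_n + zu_n' = z^{-1}u_n^2$ in place, the rest of your argument is correct. Your uniform bound on $u_n'$ on compacts justifies the summation, and dividing by $\nu_\theta^2$ and taking the quotient gives \eqref{DCIR12a}--\eqref{DCIR12b}. The denominator $\nu_\theta - z\nu_\theta' = z\sum_n (n-\theta)^2u_n^2$ is positive. Expanding $(1+x)/(1-x)$ with $x = z\nu_\theta'/\nu_\theta$ yields \eqref{DCIR12c} from \eqref{DCIR10b} alone.

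The paper differentiates $\nu_\theta/(2z)$ to get $\sum_n (n-\theta)^2(1+z^2(n-\theta)^2)^{-2}$. It then recovers $\sum_n (1+z^2(n-\theta)^2)^{-2}$ by splitting $z^2(n-\theta)^2 = (1+z^2(n-\theta)^2) - 1$. Your pair of identities for $u_n \pm z u_n'$ is a more symmetric packaging of the same computation, and it makes the positivity of the denominator explicit. Your caveat that $\nu_\theta' \to 2\theta - 1$ is not a formal consequence of the $o(z)$ expansion \eqref{DCIR10a} alone is also accurate; using \eqref{DCIR10b} as stated in the lemma is legitimate here.
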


\begin{proof}
Let $z>0$. Since
\begin{equation}
	\frac{z \nu_\theta'(z) - \nu_\theta(z)}{2z^2} = \frac{\mathrm{d}}{\mathrm{d}z} \left( \frac{\nu_\theta(z)}{2z} \right) = - \sum_{n=1}^\infty \frac{2z (n-\theta)^2}{[1+z^2(n-\theta)^2]^2}, \label{DCIR13}
\end{equation}
we find
\begin{align*}
	\frac{z}{2} \frac{\mathrm{d}}{\mathrm{d}z} \left( \frac{\nu_\theta(z)}{2z} \right) & = - \sum_{n=1}^\infty \frac{(n-\theta)^2}{[1+z^2(n-\theta)^2]^2} = \sum_{n=1}^\infty \frac{1}{[1+z^2(n-\theta)^2]^2} - \sum_{n=1}^\infty \frac{1}{1+z^2(n-\theta)^2} \\
	& = \sum_{n=1}^\infty \frac{1}{[1+z^2(n-\theta)^2]^2} - \frac{\nu_\theta(z)}{2z},
\end{align*}
from which we deduce that
\begin{equation}
	\sum_{n=1}^\infty \frac{1}{[1+z^2(n-\theta)^2]^2}  = \frac{\nu_\theta(z) + z \nu_\theta'(z)}{4z}. \label{DCIR14}
\end{equation}
The formulas stated in~\eqref{DCIR12a} and~\eqref{DCIR12b} then readily follow from~\eqref{DCIR13} and~\eqref{DCIR14}. We then combine~\eqref{DCIR12b} with~\eqref{DCIR10b} to obtain that, as $z\to 0$, 
\begin{equation*}
	\frac{r_{\boldsymbol{\varphi}_z}^2}{z^2} = \frac{\pi + 2(2\theta-1)z +o(z)}{\pi + o(z)} = 1 + \frac{2}{\pi} (2\theta-1) z + o(z),
\end{equation*}
thereby completing the proof.
\end{proof}

Owing to \Cref{prop2} and \Cref{cor4}, we are in a position to identify $\mu(\theta)$ for $\theta\in [0,1)$ and whether it is attained.

\begin{proof}[Proof of \Cref{thm2}]
Let $\theta\in [0,1)$ and consider $\mathbf{a}\in\mathcal{D}_1$. By~\eqref{DCIR07} and \Cref{prop2}, 
\begin{equation*}
	\left( \sum_{n=1}^\infty a_n^2 \right)^{1/2} \left( \sum_{n=1}^\infty (n-\theta)^2 a_n^2 \right)^{1/2} \ge \frac{1}{\nu_\theta(r_{\mathbf{a}})} \ge \frac{1}{\bar{\nu}_\theta},
\end{equation*}
so that
\begin{equation}
	\mu(\theta)\ge \frac{1}{\bar{\nu}_\theta}. \label{DCIR15}
\end{equation}
Next, since $\nu_{\theta_1}<\nu_{\theta_2}$ and $1/(1-\theta_1)<1/(1-\theta_2)$ for $0\le \theta_1<\theta_2<1$, we deduce that
\begin{equation}
	\bar{\nu}_{\theta_1} \le \bar{\nu}_{\theta_2}, \qquad 0 \le \theta_1 < \theta_2 < 1. \label{DCIR16}
\end{equation}
Owing to~\eqref{DCIR11}, $\nu_0$ and $\nu_{1/2}$ are decreasing on $(0,\infty)$, so that $\bar{\nu}_0=\bar{\nu}_{1/2}=\pi$ in view of~\eqref{DCIR10b}. Consequently, by~\eqref{DCIR16},
\begin{subequations}\label{DCIR17}
\begin{equation}
	\bar{\nu}_\theta = \pi, \qquad \theta\in \left[ 0, \frac{1}{2} \right], \label{DCIR17a}
\end{equation}
and
\begin{equation}
	\nu_\theta(z) < \nu_{1/2}(z) < \pi \;\;\text{ for }\;\; z\in \left( 0 , \frac{1}{1-\theta} \right] \;\;\text{ and }\;\; \theta\in \left[ 0, \frac{1}{2} \right]. \label{DCIR17b}
\end{equation}
\end{subequations}

We now split the analysis according to the range of $\theta$.

\smallskip
\noindent$\triangleright$ If $\theta\in [0,1/2]$ then $\bar{\nu}_\theta=\pi$ by~\eqref{DCIR17a}. We pick $z\in (0,\infty)$ and infer from~\eqref{DCIR12a} that
\begin{equation*}
	\mu(\theta) \le J_\theta[\boldsymbol{\varphi}_z] = r_{\boldsymbol{\varphi}_z} \sum_{n=1}^\infty (n-\theta)^2 \varphi_{n,z}^2  = \frac{r_{\boldsymbol{\varphi}_z}}{z} \frac{\nu_\theta(z) - z\nu_\theta'(z)}{\nu_\theta^2(z)},
\end{equation*}
whence
\begin{equation*}
	\mu(\theta) \le \lim_{z\to 0} \frac{r_{\boldsymbol{\varphi}_z}}{z} \frac{\nu_\theta(z) - z\nu_\theta'(z)}{\nu_\theta^2(z)} = \frac{1}{\pi}
\end{equation*}
by~\eqref{DCIR10b} and~\eqref{DCIR12c}. Combining~\eqref{DCIR15} and~\eqref{DCIR17} with the above inequality entails that
\begin{equation}
	\mu(\theta) = \frac{1}{\pi} = \frac{1}{\bar{\nu}_\theta}, \qquad \theta\in \left[ 0,\frac{1}{2} \right]. \label{DCIR18}
\end{equation}

\smallskip
\noindent$\triangleright$ If $\theta\in (1/2,1)$, then $\nu_\theta'>0$ in a right neighbourhood of $z=0$ by~\eqref{DCIR10b}, while
\begin{equation*}
	\nu_\theta'\left(\frac{1}{1-\theta}\right) = 2(1-\theta)^2 \sum_{n=1}^\infty \frac{(1-\theta)^2 - (n-\theta)^2}{\big[(1-\theta)^2 + (n-\theta)^2\big]^2}<0.
\end{equation*}
Therefore, $\bar{\nu}_\theta>\pi$ in view of~\eqref{DCIR10b} and there is $Z\in (0,1/(1-\theta))$ such that $\nu_\theta(Z) = \bar{\nu}_\theta$ and $\nu_\theta'(Z)=0$. This last property implies that
\begin{subequations}\label{DCIR19}
\begin{equation}
	\sum_{n=1}^\infty \frac{1}{\big[1+Z^2(n-\theta)^2\big]^2} = \sum_{n=1}^\infty \frac{Z^2(n-\theta)^2}{\big[1+Z^2(n-\theta)^2\big]^2}, \label{DCIR19a}
\end{equation}
from which we deduce that
\begin{equation}
	r_{\boldsymbol{\varphi}_Z} = Z. \label{DCIR19b}
\end{equation}
\end{subequations}
It then readily follows from \Cref{prop2} (with $\mathbf{a}=\boldsymbol{\varphi}_Z$) and the properties of $Z$ that
\begin{equation*}
	\mu(\theta) \le \left( \sum_{n=1}^\infty \varphi_{n,Z}^2 \right)^{1/2} \left( \sum_{n=1}^\infty (n-\theta)^2 \varphi_{n,Z}^2 \right)^{1/2}  = \frac{1}{\nu_\theta(Z)} = \frac{1}{\bar{\nu}_\theta}.
\end{equation*}
Consequently,
\begin{equation}
	\mu(\theta) = \min_{\mathbf{a}\in\mathcal{D}_1} J_\theta[\mathbf{a}] = \frac{1}{\bar{\nu}_\theta}<\frac{1}{\pi}, \qquad \theta\in \left( \frac{1}{2},1 \right). \label{DCIR20}
\end{equation}

\smallskip

We are left with proving that, for $\theta\in [0,1/2]$, the infimum $\mu(\theta)=1/\pi$ is not attained in $\mathcal{D}_1$. To this end, we assume for contradiction that there is $\mathbf{a}=(a_n)_{n\ge 1}\in \mathcal{D}_1$ such that
\begin{equation}
	J_\theta[\mathbf{a}]^2 = \left( \sum_{n=1}^\infty a_n^2 \right) \left( \sum_{n=1}^\infty n^2 a_n^2 \right) = \frac{1}{\pi^2}. \label{DCIR21}
\end{equation}
Owing to \Cref{prop2}, we infer from~\eqref{DCIR21} that
\begin{equation*}
	\frac{1}{\pi} = \frac{1}{\nu_\theta(r_{\mathbf{a}})} + \frac{1}{\nu_\theta(r_{\mathbf{a}})} \sum_{n=1}^\infty \frac{(a_n-\varphi_{n,r_{\mathbf{a}}})^2}{\varphi_{n,r_{\mathbf{a}}}},
\end{equation*}
with $r_{\mathbf{a}}\in (0,1/(1-\theta)]$ defined in~\eqref{DCIR07}; that is,
\begin{equation*}
	\frac{\pi - \nu_\theta(r_{\mathbf{a}})}{\pi \nu_\theta(r_{\mathbf{a}})} + \frac{1}{\nu_\theta(r_{\mathbf{a}})} \sum_{n=1}^\infty \frac{(a_n-\varphi_{n,r_{\mathbf{a}}})^2}{\varphi_{n,r_{\mathbf{a}}}} = 0.
\end{equation*}
Since both terms on the left-hand side of the above equality are non-negative, we conclude that
\begin{equation*}
	\nu_\theta(r_{\mathbf{a}}) = \pi \;\;\text{ and }\;\; a_n=\varphi_{n,r_{\mathbf{a}}}, \quad n\ge 1,
\end{equation*}
which contradicts the positivity of $r_{\mathbf{a}}$ in view of~\eqref{DCIR17b}. Thus, $\mu(\theta)$ is not attained when $\theta\in [0,1/2]$ and the proof of \Cref{thm2} is complete.
\end{proof}

\begin{remark}\label{rem2}
	For $\theta=0$, simpler proofs of~\Cref{thm2} are available, see \cite{Har1936} for instance, but the approach designed above provides a unified framework to establish \Cref{thm2} for all values of $\theta\in [0,1)$ and turns out to be efficient to study Carlson's inequality for finite sums, as we shall see in the next section.
\end{remark}

\section{Carlson's inequality for finite sums}\label{sec4}

\begin{proof}[Proof of \Cref{thm3}]
Let $\mathbf{a}\in \mathcal{D}_{1,N}$ and set
\begin{equation*}
	 r_{\mathbf{a}}^2 := \left( \sum_{n=1}^N a_n^2 \right) \left( \sum_{n=1}^N n^2 a_n^2 \right)^{-1} \in \left[ \frac{1}{N^2}, 1 \right].
\end{equation*}
We then perform the same computation as in the proof of \Cref{prop2} to derive the identity
\begin{equation}
	S_N[\mathbf{a}] = \left( \sum_{n=1}^N a_n^2 \right)^{1/2}  \left( \sum_{n=1}^N n^2 a_n^2 \right)^{1/2} = \frac{1}{\nu_N(r_{\mathbf{a}})} + \frac{1}{\nu_N(r_{\mathbf{a}})} \sum_{n=1}^N \frac{(a_n-\varphi_{n,r_{\mathbf{a}}})^2}{\varphi_{n,r_{\mathbf{a}}}}. \label{FSCI5}
\end{equation}
Since $r_{\mathbf{a}}\in [1/N,1]$, we readily infer from~\eqref{FSCI5} that
\begin{equation*}
	S_N[\mathbf{a}] \ge \frac{1}{\bar{\nu}_N},
\end{equation*}
whence
\begin{equation}
	\mu_N \ge \frac{1}{\bar{\nu}_N}. \label{FSCI6}
\end{equation}
Next, for $z\in [1/N,1]$,
\begin{equation*}
	\nu_N'(z) = 2 \sum_{n=1}^N \frac{1 - n^2 z^2}{(1 + n^2 z^2)^2},
\end{equation*}
from which we deduce that $\nu_N'(1)<0$ and $\nu_N'(1/N)>0$. Consequently, $\nu_N$ attains its maximum in $(1/N,1)$. Consider now $Z\in (1/N,1)$ such that $\nu_N(Z)=\bar{\nu}_N$. Then $\nu_N'(Z)=0$, which implies that
\begin{align*}
	0 & = \sum_{n=1}^N \frac{1 - n^2 Z^2}{(1 + n^2 Z^2)^2} = \sum_{n=1}^N \big(1 - n^2 Z^2\big) \left( \frac{\bar{\nu}_N \varphi_{n,Z}}{2Z} \right)^2 \\
	& = \frac{\bar{\nu}_N^2}{4 Z^2} \left[ \sum_{n=1}^N \varphi_{n,Z}^2 - Z^2 \sum_{n=1}^N n^2 \varphi_{n,Z}^2 \right] = \frac{\bar{\nu}_N^2}{4 Z^2} \left( r_{\boldsymbol{\varphi}_Z}^2 - Z^2 \right) \sum_{n=1}^N n^2 \varphi_{n,Z}^2. 
\end{align*}
Consequently, $r_{\boldsymbol{\varphi}_Z}=Z$ and it follows from~\eqref{FSCI5} that 
\begin{align*}
	S_N[\boldsymbol{\varphi}_Z] & = \frac{1}{\nu_N(r_{\boldsymbol{\varphi}_Z})} + \frac{1}{\nu_N(r_{\boldsymbol{\varphi}_Z})} \sum_{n=1}^N \frac{(\varphi_{n,Z}-\varphi_{n,r_{\boldsymbol{\varphi}_Z}})^2}{\varphi_{n,r_{\boldsymbol{\varphi}_Z}}} = \frac{1}{\nu_N(Z)} = \frac{1}{\bar{\nu}_N}. 
\end{align*}
Recalling~\eqref{FSCI6}, we have shown that $\mu_N=1/\bar{\nu}_N=S_N[\boldsymbol{\varphi}_Z]$ for all $Z\in (1/N,1)$ such that $\nu_N(Z)=\bar{\nu}_N$. Conversely, if $\mathbf{a}\in\mathcal{D}_{1,N}$ is such that $S_N[\mathbf{a}]=\mu_N$, then~\eqref{FSCI5} implies that
\begin{equation*}
	\mu_N =	\frac{1}{\bar{\nu}_N} = S_N[\mathbf{a}] = \frac{1}{\nu_N(r_{\mathbf{a}})} + \frac{1}{\nu_N(r_{\mathbf{a}})} \sum_{n=1}^N \frac{(a_n-\varphi_{n,r_{\mathbf{a}}})^2}{\varphi_{n,r_{\mathbf{a}}}};
\end{equation*}
that is,
\begin{equation*}
	\frac{\bar{\nu}_N - \nu_N(r_{\mathbf{a}})}{\bar{\nu}_N \nu_N(r_{\mathbf{a}})} + \frac{1}{\nu_N(r_{\mathbf{a}})} \sum_{n=1}^N \frac{(a_n-\varphi_{n,r_{\mathbf{a}}})^2}{\varphi_{n,r_{\mathbf{a}}}} = 0.
\end{equation*}
Since both terms on the left-hand side of the above identity are non-negative, we conclude that
\begin{equation*}
	\nu_N(r_{\mathbf{a}}) = \bar{\nu}_N \;\;\text{ and }\;\; \mathbf{a} = \boldsymbol{\varphi}_{r_{\mathbf{a}}},
\end{equation*}
which entails that there is $Z\in (1/N,1)$ satisfying $\nu_N(Z) = \bar{\nu}_N$ such that $\mathbf{a}=\boldsymbol{\varphi}_Z$. Finally, the lower bound~\eqref{FSCI4} follows from \Cref{prop3} below.
\end{proof}

No explicit formula seems to be available for $\bar{\nu}_N$ for $N\ge 2$ but  more information can be obtained on $\bar{\nu}_N$ (and thus on $\mu_N$). 

\begin{proposition}\label{prop3}
	Recalling the definition~\eqref{Z2} of $Z_2$, there holds $\nu_2(Z_2)=\bar{\nu}_2$ and, for $N\ge 3$, 
	\begin{equation*}
		2 \left[ \arctan{\sqrt{N+1}} - \arctan{\frac{1}{\sqrt{N+1}}} \right] \le \bar{\nu}_N = \frac{1}{\mu_N} \le 2 \arctan(NZ_2).
	\end{equation*}
\end{proposition}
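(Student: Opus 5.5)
The proof has three parts: the case $N=2$ by an explicit computation, the lower bound by choosing a good $z$ and comparing the sum with an integral, and the upper bound by locating the maximiser of $\nu_N$ and comparing the sum with another integral.

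\textbf{The case $N=2$.} By \Cref{thm3}, $\bar{\nu}_2$ is attained at some $Z\in(1/2,1)$ with $\nu_2'(Z)=0$. Set $w=z^2$. Then
\begin{equation*}
\nu_2'(z) = 2\left[ \frac{1-w}{(1+w)^2} + \frac{1-4w}{(1+4w)^2} \right].
\end{equation*}
Clearing denominators, the condition $\nu_2'(z)=0$ becomes
\begin{equation*}
(1-w)(1+4w)^2 + (1-4w)(1+w)^2 = 2 + 5w + w^2 - 20w^3 = 0 .
\end{equation*}
I will check that the cubic $20w^3-w^2-5w-2$ has exactly one positive root, for instance by Descartes' rule of signs. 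The substitution $w=y+1/60$ removes the quadratic term, and Cardano's formula should then reproduce exactly the expression \eqref{Z2} for $Z_2^2$.

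Since the critical point in $(1/2,1)$ is unique, it must be the maximiser. So $\nu_2(Z_2)=\bar{\nu}_2$, and moreover $\nu_2'>0$ on $(0,Z_2)$ and $\nu_2'<0$ on $(Z_2,\infty)$. The numerical algebra here is routine but fiddly. The conceptual content is only the uniqueness of the positive root.

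\textbf{Lower bound.} Fix $z>0$. The map $x\mapsto 2z/(1+z^2x^2)$ is decreasing on $(0,\infty)$, so each term of the sum dominates the integral over the following unit interval:
\begin{equation*}
\nu_N(z) = \sum_{n=1}^N \frac{2z}{1+n^2z^2} \ge \int_1^{N+1} \frac{2z}{1+z^2x^2}\,\mathrm{d}x = 2\big[\arctan((N+1)z) - \arctan z\big].
\end{equation*}
Maximising the right-hand side over $z$ gives the condition $(N+1)(1+z^2) = 1+(N+1)^2z^2$, that is $z = 1/\sqrt{N+1}$. This value lies in $[1/N,1]$ for $N\ge 2$. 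Plugging it in yields
\begin{equation*}
\bar{\nu}_N \ge \nu_N\big(1/\sqrt{N+1}\big) \ge 2\left[\arctan\sqrt{N+1} - \arctan\frac{1}{\sqrt{N+1}}\right].
\end{equation*}
By \Cref{thm3}, $\bar{\nu}_N = 1/\mu_N$, which gives the stated lower bound.

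\textbf{Upper bound.} The same monotonicity, now with each term compared to the integral over the preceding unit interval, gives $\nu_N(z)\le \int_0^N 2z/(1+z^2x^2)\,\mathrm{d}x = 2\arctan(Nz)$. It therefore suffices to show that every maximiser $Z_N$ of $\nu_N$ satisfies $Z_N\le Z_2$. Then $\bar{\nu}_N = \nu_N(Z_N) \le 2\arctan(NZ_N) \le 2\arctan(NZ_2)$.

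To prove $Z_N\le Z_2$, take $z>Z_2$, recalling $Z_2\approx 0.811>1/2$. For every $n\ge 3$ we have $nz>1$, so the term $(1-n^2z^2)/(1+n^2z^2)^2$ in $\nu_N'(z)$ is negative. Hence $\nu_N'(z) < \nu_2'(z) < 0$ by the sign analysis of the $N=2$ case. So $\nu_N$ is strictly decreasing on $(Z_2,\infty)$ and all its maximisers lie in $(1/N,Z_2]$. This also gives the strict inequality $2\arctan(NZ_2)<2\arctan N$ in \eqref{FSCI4}.

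The main obstacle is the $N=2$ step. It requires verifying that Cardano's formula matches \eqref{Z2} and, more importantly, that $\nu_2'$ changes sign only once, so that $\nu_2'<0$ on all of $(Z_2,\infty)$. The rest consists of elementary integral comparisons.
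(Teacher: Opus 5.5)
Your proposal is correct and follows the paper's proof essentially step by step. The ingredients match: the explicit cubic $20w^3-w^2-5w-2$ for $\nu_2'$ with a unique positive root, the domination $\nu_N'<\nu_2'<0$ beyond $Z_2$ coming from the negative $n\ge 3$ terms, and the two integral comparisons $2[\arctan((N+1)z)-\arctan z]\le \nu_N(z)\le 2\arctan(Nz)$, used at $z=1/\sqrt{N+1}$ and at a maximiser in $(1/N,Z_2]$. Your Descartes/Cardano check (with $w=(1+t)/60$ giving $t^3-903t-22502=0$) does reproduce \eqref{Z2}, which fills in a step the paper only asserts.
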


\begin{proof}
For $N=2$,
\begin{equation*}
	\nu_2'(z) = - \frac{20z^6 - z^4 - 5 z^2 - 2}{[(1+z^2)(1+4z^2)]^2}, \qquad z\in \mathbb{R},
\end{equation*}
and has a unique zero $Z_2\in (0,\infty)$ which is given by~\eqref{Z2} and belongs to $(1/2,1)$. Consequently,
\begin{equation}
	\bar{\nu}_2 = \nu_2(Z_2) \;\;\text{ and }\;\; \nu_2'(z) < 0, \qquad z\in (Z_2,1]. \label{FSCI7}
\end{equation}
Next, for $N\ge 3$ and $z\in (1/3,1]$,
\begin{align*}
	\nu_N'(z) & = 2 \sum_{n=1}^N \frac{1 - n^2 z^2}{(1 + n^2 z^2)^2} = \nu_2'(z) + 2 \sum_{n=3}^N \frac{1 - n^2 z^2}{(1 + n^2 z^2)^2} \\
	& \le \nu_2'(z) + 2 \sum_{n=3}^N \frac{1 - 9 z^2}{(1 + n^2 z^2)^2} < \nu_2'(z).
\end{align*}
Since $Z_2>1/2>1/3$, we deduce from~\eqref{FSCI7} and the above inequality that $\nu_N'(z)<\nu_2'(z)\le 0$ for $z\in [Z_2,1]$. Consequently,
\begin{equation}
	\bar{\nu}_N = \max_{(1/N,Z_2)} \nu_N. \label{FSCI8}
\end{equation}
Moreover, for $z\in [1/N,1]$,
\begin{equation*}
	\frac{\arctan{(N+1)z} - \arctan{z}}{z} = \int_1^{N+1} \frac{dx}{1+z^2 x^2} < \frac{\nu_N(z)}{2z} < \int_0^N \frac{dx}{1+z^2 x^2} = \frac{\arctan{Nz}}{z},
\end{equation*}
so that
\begin{equation}
	2 \big[ \arctan{(N+1)z} - \arctan{z} \big] < \nu_N(z) < 2 \arctan{Nz}, \qquad z\in [1/N,1]. \label{FSCI9}
\end{equation}
It then follows from~\eqref{FSCI9} that
\begin{equation*}
	2 \left[ \arctan{\sqrt{N+1}} - \arctan{\frac{1}{\sqrt{N+1}}} \right] = 2 \max_{z\in [1/N,1]} \big\{ \arctan{(N+1)z} - \arctan{z} \big \} \le \bar{\nu}_N,
\end{equation*}
while~\eqref{FSCI8} and~\eqref{FSCI9} imply that
\begin{equation*}
	\bar{\nu}_N \le 2 \arctan(NZ_2),
\end{equation*} 
and the proof is complete.
\end{proof}

\appendix
\section{Auxiliary results}

\begin{lemma}\label{lemap1}
	We define 
	\begin{equation}
		I_n := \int_0^\infty \frac{\mathrm{d}x}{(1+x^2)^n}, \quad n\ge 1, \qquad J_n := \int_0^\infty \frac{x^2}{(1+x^2)^n}\ \mathrm{d}x, \quad n\ge 2. \label{ap01}
	\end{equation}
	Then
	\begin{equation}
		I_1 = \frac{\pi}{2}, \quad I_{n+1} = \frac{2n-1}{2n} I_n, \quad J_{n+1} = \frac{I_n}{2n}, \qquad n\ge 1. \label{ap02}
	\end{equation}
\end{lemma}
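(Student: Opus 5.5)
We prove the three identities in \eqref{ap02} separately, each by a one-line computation on the integrands in \eqref{ap01}.

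\emph{The value of $I_1$.} Since $\arctan$ is a primitive of $1/(1+x^2)$,
\begin{equation*}
I_1=\lim_{x\to\infty}\arctan x-\arctan 0=\frac{\pi}{2}.
\end{equation*}

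\emph{The recursion for $I_{n+1}$.} Fix $n\ge1$ and integrate $I_n$ by parts with $u=(1+x^2)^{-n}$ and $\mathrm{d}v=\mathrm{d}x$. The boundary term $x(1+x^2)^{-n}$ vanishes at $0$. It also tends to $0$ as $x\to\infty$, because $n\ge1$. Since
\begin{equation*}
\frac{\mathrm{d}}{\mathrm{d}x}(1+x^2)^{-n}=-\frac{2nx}{(1+x^2)^{n+1}},
\end{equation*}
we get
\begin{equation*}
I_n=2n\int_0^\infty \frac{x^2}{(1+x^2)^{n+1}}\,\mathrm{d}x=2nJ_{n+1}.
\end{equation*}
Writing $x^2=(1+x^2)-1$ in the integrand of $J_{n+1}$ gives
\begin{equation*}
J_{n+1}=I_n-I_{n+1}.
\end{equation*}
Substituting this into the previous identity yields $I_n=2n(I_n-I_{n+1})$, that is, $I_{n+1}=\frac{2n-1}{2n}I_n$.

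\emph{The formula for $J_{n+1}$.} This is the identity $I_n=2nJ_{n+1}$ just obtained, rewritten as $J_{n+1}=I_n/(2n)$.

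All integrals involved are absolutely convergent, since $J_{n+1}$ has exponent $n+1\ge2$. The calculation is therefore elementary, and there is no genuine obstacle. The only point to check is that the boundary term in the integration by parts vanishes at infinity, which holds because $n\ge1$.

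The integrals used in \eqref{CIN} and in the proof of \Cref{thm1} then follow from these values by the substitution $y=zx$.
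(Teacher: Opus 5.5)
Your proof is correct and is essentially the paper's argument: both combine $I_{n+1}+J_{n+1}=I_n$ with one integration by parts giving $J_{n+1}=I_n/(2n)$. You integrate $I_n$ by parts with $\mathrm{d}v=\mathrm{d}x$, whereas the paper integrates $J_n$ by parts against $x$, which is the same computation read backwards; you also verify the vanishing boundary term and the value $I_1=\pi/2$, which the paper leaves implicit.
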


\begin{proof}
Let $n\ge 2$. On the one hand, 
\begin{equation*}
	I_n + J_n = I_{n-1}.
\end{equation*}	
On the other hand, 
\begin{equation*}
	J_n = - \int_0^\infty \frac{x}{2(n-1)} \frac{\mathrm{d}}{\mathrm{d}x}\left[ \frac{1}{(1+x^2)^{n-1}}\right]\ \mathrm{d}x = \frac{I_{n-1}}{2(n-1)}.
\end{equation*}
Combining the above two identities gives~\eqref{ap02}.
\end{proof}

\section*{Acknowledgments}

Part of this work was done while enjoying the hospitality of the Innovation Academy for Precision Measurement Science and Technology, Chinese Academy of Sciences, Wuhan and is partially funded by the the Chinese Academy of Sciences President's International Fellowship Initiative Grant No.~2025PVA0101. I thank Dominique Barr\`ere and Jean-Baptiste Hiriart-Urruty for providing useful references.

For the purpose of Open Access, a CC-BY license has been applied by the authors to all versions of this article leading up to the Author Accepted Manuscript.

\section*{Disclosure statement}

The author reports there are no competing interests to declare.

\bibliographystyle{siam}
\bibliography{CarlsonInequality}

\end{document}